\documentclass{amsart}
\usepackage{amssymb}
\usepackage{amsfonts}

\setcounter{MaxMatrixCols}{10}

\newtheorem{theorem}{Theorem}
\theoremstyle{plain}

\newtheorem{corollary}{Corollary}

\newtheorem{definition}{Definition}
\newtheorem{example}{Example}

\newtheorem{proposition}{Proposition}
\newtheorem{remark}{Remark}

\numberwithin{equation}{section}
\input{tcilatex}

\begin{document}
\title[Some Inequalities for $(h-s)_{1,2}-$convex Functions]{Some New
Inequalities for $(h-s)_{1,2}-$convex Functions via Further Properties}
\author{$^{\blacksquare }$M. Emin \"{O}zdemir}
\address{$^{\blacksquare }$Ataturk University, K.K. Education Faculty,
Department of Mathematics, 25240, Erzurum, Turkey}
\email{emos@atauni.edu.tr}
\author{$^{\bigstar }$Ahmet Ocak Akdemir}
\address{$^{\bigstar }$A\u{g}r\i\ \.{I}brahim \c{C}e\c{c}en University,
Faculty of Science and Letters, Department of Mathematics, 04100, A\u{g}r\i
, Turkey}
\email{ahmetakdemir@agri.edu.tr}
\author{$^{\triangle }$Mevl\"{u}t TUN\c{C}}
\address{$^{\triangle }$Kilis 7 Aral\i k University, Faculty of Science and
Arts, Department of Mathematics, 79000, Kilis, Turkey}
\email{mevluttunc@kilis.edu.tr} \subjclass[2000]{Primary 26D15,
26A51} \keywords{$(h-s)_{1,2}-$convex function,
super-multiplicative, similarly ordered, Hadamard's inequality.}

\begin{abstract}
In this paper, we establish some new inequalities of the
Hermite-Hadamard like for class of $(h-s)_{1,2}-$convex functions
which are ordinary, super-multiplicative or similarly ordered and
nonnegative.
\end{abstract}

\maketitle

\section{INTRODUCTION}

Let $f:I\subseteq
\mathbb{R}
\rightarrow
\mathbb{R}
$ be a convex mapping and $a,b\in I$ with $a<b$. The following double
inequality:

\begin{equation}
f\left( \frac{a+b}{2}\right) \leq \frac{1}{b-a}\int_{a}^{b}f\left( x\right)
dx\leq \frac{f\left( a\right) +f\left( b\right) }{2}  \label{1.1}
\end{equation}%
is well-known in the literature as Hadamard's inequality for convex mapping.
Note that some of the classical inequalities for means can be derived from (%
\ref{1.1}) for appropriate particular selections of the mapping $f$. Both
inequalities hold in the reversed direction if $f$ is concave.

\begin{definition}
\lbrack See \cite{GO}] We say that $f:I\rightarrow
\mathbb{R}
$\ is Godunova-Levin function or that $f$\ belongs to the class $Q\left(
I\right) $ if $f$\ is non-negative and for all $x,y\in I$\ and $t\in \left(
0,1\right) $ we have \ \ \ \ \ \ \ \ \ \ \ \ \
\begin{equation}
f\left( tx+\left( 1-t\right) y\right) \leq \frac{f\left( x\right) }{t}+\frac{%
f\left( y\right) }{1-t}.  \label{1.2}
\end{equation}
\end{definition}

\begin{definition}
\lbrack See \cite{HU}] Let $s\in \left( 0,1\right] .$\ A function $f:\left(
0,\infty \right] \rightarrow \left[ 0,\infty \right] $ is said to be $s-$%
convex in the second sense if \ \ \ \ \ \ \ \ \ \ \ \
\begin{equation}
f\left( tx+\left( 1-t\right) y\right) \leq t^{s}f\left( x\right) +\left(
1-t\right) ^{s}f\left( y\right)  \label{1.3}
\end{equation}%
for all $x,y\in \left( 0,b\right) $ and $t\in \left[ 0,1\right] .$
\end{definition}

In 1978, Breckner introduced $s-$convex functions as a generalization of
convex functions in \cite{WW}. Also, in that one work Breckner proved the
important fact that the set valued map is $s-$convex only if the associated
support function is $s-$convex function\ in \cite{WW2}. A number of
properties and connections with $s-$convex in the first sense are discussed
in paper \cite{HU}. Of course, $s-$convexity means just convexity when $s=1$.

\begin{definition}
\lbrack See \cite{SS1}] We say that $f:I\subseteq
\mathbb{R}
\rightarrow
\mathbb{R}
$ is a $P-$ function or that $f$ belongs to the class $P\left( I\right) $ if
$f$ is nonnegative and for all $x,y\in I$ and $t\in \left[ 0,1\right] ,$\ we
have%
\begin{equation}
f\left( tx+\left( 1-t\right) y\right) \leq f\left( x\right) +f\left(
y\right) .  \label{1.4}
\end{equation}
\end{definition}

\begin{definition}
\lbrack See \cite{SA}] Let $h:J\subseteq
\mathbb{R}
\rightarrow
\mathbb{R}
$\ be a non-negative function. We say that $f:I\subseteq
\mathbb{R}
\rightarrow
\mathbb{R}
$ is an $h-$convex function or that $f$ belongs to the class $SX\left(
h,I\right) $, if $f$ is nonnegative and for all $x,y\in I$\ and $\alpha \in %
\left[ 0,1\right] $\ we have \ \ \ \ \ \ \ \ \ \ \ \
\begin{equation}
f\left( \alpha x+\left( 1-\alpha \right) y\right) \leq h\left( \alpha
\right) f\left( x\right) +h\left( 1-\alpha \right) f\left( y\right) .
\label{1.5}
\end{equation}
\end{definition}

If inequality (\ref{1.5}) is reversed, then $f$ is said to be $h-$concave,
i.e., $f\in SV\left( h,I\right) $. Obviously, if $h\left( \alpha \right)
=\alpha $, then all nonnegative convex functions belong to $SX\left(
h,I\right) $\ and all nonnegative concave functions belong to $SV(h,I);$ if $%
h(\alpha )=\frac{1}{\alpha },$ then $SX(h,I)=Q(I);$ if $h(\alpha )=1,$ $%
SX(h,I)\supseteq P(I);$ and if $h(\alpha )=\alpha ^{s},$ where $s\in \left(
0,1\right) ,$ then $SX(h,I)\supseteq K_{s}^{2}.$

Furthermore, in \cite{SA2} Bombardelli and Varo\v{s}anec wrote some
generalizations of the Hermite-Hadamard inequalities and some properties of
functions $H$ and $F.$

More about those inequalities can be found in a number of papers (for
example: see \cite{SA, OZ, SAR, BU, AO, METU}).

\begin{definition}
\lbrack See \cite{MMA}] Let $h:J\subset
\mathbb{R}
\rightarrow
\mathbb{R}
$ \ be a non-negative function, $h\neq 0.$We say that $f:%
\mathbb{R}
^{+}\cup \left\{ 0\right\} \rightarrow
\mathbb{R}
$ is an $\left( h-s\right) _{1}-$convex function in the first sense, or that
$f$ belong to the class $SX(\left( h-s\right) _{1},I),$ if $f$ is
non-negative and for all $x,y\in \left[ 0,\infty \right) =I,$ $s\in \left(
0,1\right] ,$ $t\in \left[ 0,1\right] $ we have%
\begin{equation}
f(tx+(1-t)y)\leq h^{s}(t)f(x)+(1-h^{s}(t))f(y).  \label{21}
\end{equation}%
\bigskip If inequality (\ref{21}) is reversed, then $f$ is said to be $%
\left( h-s\right) _{1}-$concave function in the first sense, i.e., $f\in
SV(\left( h-s\right) _{1},I).$\bigskip
\end{definition}

\begin{definition}
\lbrack See \cite{MMA}] Let $\ \ \ \ h:J\subset
\mathbb{R}
\rightarrow
\mathbb{R}
$ be a non-negative function,\ $h\neq 0.$ We say that $f:%
\mathbb{R}
^{+}\cup \left\{ 0\right\} \rightarrow
\mathbb{R}
$ \ \ is an \ $(h-s)_{2}-$convex function in the second sense, or that $f$
belong to the class $SX(\left( h-s\right) _{2},I)$ , if $f$ is non-negative
and for all $u,v\in \left[ 0,\infty \right) =I,$ $s\in \left( 0,1\right] ,$ $%
t\in \left[ 0,1\right] $ we have%
\begin{equation}
f(tu+(1-t)v)\leq h^{s}(t)f(u)+h^{s}(1-t)f(v).  \label{22}
\end{equation}%
If inequality (\ref{22}) is reversed, then $f$ is said to be $(h-s)_{2}-$%
concave function in the second sense, i.e., $f\in SV(\left( h-s\right)
_{2},I).$\bigskip
\end{definition}

Obviously, in (\ref{22}), if $h(t)=t$, then all $s-$convex functions in the
second sense belongs to $SX(\left( h-s\right) _{2},I)$ and all $s-$concave
functions in the second sense belongs to $SV(\left( h-s\right) _{2},I)$, and
it can be easily seen that for $h(t)=t,$ $s=1,$ $(h-s)_{2}-$convexity
reduces to ordinary convexity defined on $\left[ 0,\infty \right) .$
Similarly, in (\ref{21}), if $h(t)=t$, then all $s-$convex functions in the
first sense belongs to $SX(\left( h-s\right) _{1},I)$ and all $s-$concave
functions in the first sense belongs to $SV(\left( h-s\right) _{1},I)$, and
it can be easily seen that for $h(t)=t$ $,s=1,$ $(h-s)_{1}-$convexity
reduces to ordinary convexity defined on $\left[ 0,\infty \right) .$

\begin{example}
\lbrack See \cite{MMA}] Let $h(t)=t$ be a function and let the function $f$
be defined as following;%
\begin{equation*}
f:[2,4]\rightarrow
\mathbb{R}
^{+},\text{ \ \ \ \ }f(x)=\ln x.
\end{equation*}%
Then $f$ is non-convex and non-$h-$convex function, but it is $(h-s)_{2}-$%
convex function.
\end{example}

\begin{definition}
\lbrack See \cite{SA}] A function $h:J\rightarrow
\mathbb{R}
$ is said to be a super-multiplicative function if%
\begin{equation}
h(xy)\geq h(x)h(y)  \label{1.6}
\end{equation}%
for all $x,y\in J$.

If inequality (\ref{1.6}) is reversed, then $h$ is said to be a
sub-multiplicative function. If the equality holds in (\ref{1.6}),
then $h$ is said to be a multiplicative function.
\end{definition}

\begin{definition}
\lbrack See \cite{HA}] A function $h:J\rightarrow
\mathbb{R}
$ is said to be a super-additive function if%
\begin{equation}
h(x+y)\geq h(x)+h(y)  \label{1.7}
\end{equation}%
for all $x,y\in J$.
\end{definition}

\begin{definition}
\lbrack See \cite{A}] Two functions $f:X\rightarrow
\mathbb{R}
$ and $g:X\rightarrow
\mathbb{R}
$ are said to be similarly ordered, shortly $f$ s.o. $g$, if%
\begin{equation*}
(f(x)-f(y))(g(x)-g(y))\geq 0
\end{equation*}%
for every $x,y\in X.$
\end{definition}

\begin{theorem}
\lbrack See \cite{SAR}] Let $f\in SX(h_{1},I),$ $g\in SX(h_{2},I),$ $a,b\in
I,$ $a<b,$ be functions such that $fg\in L_{1}\left( \left[ a,b\right]
\right) ,$ and $h_{1}h_{2}\in L_{1}\left( \left[ 0,1\right] \right) ,$ then
the following inequality holds%
\begin{equation*}
\frac{1}{b-a}\dint\limits_{a}^{b}f(x)g(x)\leq
M(a,b)\dint\limits_{0}^{1}h_{1}(t)h_{2}(t)dt+N(a,b)\dint%
\limits_{0}^{1}h_{1}(t)h_{2}(1-t)dt
\end{equation*}%
where%
\begin{eqnarray*}
M(a,b) &=&f(a)g(a)+f(b)g(b) \\
N(a,b) &=&f(a)g(b)+f(b)g(a).
\end{eqnarray*}
\end{theorem}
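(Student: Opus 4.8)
The plan is to reduce the statement to a pointwise product inequality and then integrate, exactly as in the classical derivation of Hermite--Hadamard type bounds. First I would parametrize $[a,b]$ by writing $x=ta+(1-t)b$ for $t\in\left[ 0,1\right] $, so that as $t$ runs over $\left[ 0,1\right] $ the point $x$ sweeps $[a,b]$. Under this change of variables $dx=(a-b)\,dt$, and a direct computation gives the basic identity
$$\frac{1}{b-a}\int_{a}^{b}f(x)g(x)\,dx=\int_{0}^{1}f\left( ta+(1-t)b\right) g\left( ta+(1-t)b\right) \,dt.$$

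Next I would apply the defining inequality (\ref{1.5}) to each function separately. Since $f\in SX(h_{1},I)$ and $g\in SX(h_{2},I)$, we have
$$f\left( ta+(1-t)b\right) \leq h_{1}(t)f(a)+h_{1}(1-t)f(b),$$
$$g\left( ta+(1-t)b\right) \leq h_{2}(t)g(a)+h_{2}(1-t)g(b).$$
Because $f$, $g$, $h_{1}$, $h_{2}$ are all nonnegative, both sides of each inequality are nonnegative, so the two inequalities may be multiplied to yield a pointwise bound for the product $f\left( ta+(1-t)b\right) g\left( ta+(1-t)b\right) $. Expanding the right-hand side produces four terms carrying the coefficients $f(a)g(a)$, $f(a)g(b)$, $f(b)g(a)$, $f(b)g(b)$, multiplied respectively by $h_{1}(t)h_{2}(t)$, $h_{1}(t)h_{2}(1-t)$, $h_{1}(1-t)h_{2}(t)$, and $h_{1}(1-t)h_{2}(1-t)$.

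Finally I would integrate this pointwise inequality over $t\in\left[ 0,1\right] $. The left side becomes $\frac{1}{b-a}\int_{a}^{b}fg$ by the identity above, while the right side splits into four integrals. To collapse these into the stated form I would invoke the reflection substitution $u=1-t$, which shows that $\int_{0}^{1}h_{1}(1-t)h_{2}(1-t)\,dt=\int_{0}^{1}h_{1}(t)h_{2}(t)\,dt$ and $\int_{0}^{1}h_{1}(1-t)h_{2}(t)\,dt=\int_{0}^{1}h_{1}(t)h_{2}(1-t)\,dt$. Grouping the matching pairs then gives precisely $M(a,b)\int_{0}^{1}h_{1}(t)h_{2}(t)\,dt+N(a,b)\int_{0}^{1}h_{1}(t)h_{2}(1-t)\,dt$, as claimed.

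I expect the only delicate point to be the justification of multiplying the two convexity inequalities: this step depends \emph{essentially} on the standing nonnegativity hypotheses on $f$, $g$, $h_{1}$, $h_{2}$, without which the termwise product bound could fail. The integrability assumptions $fg\in L_{1}\left( \left[ a,b\right] \right) $ and $h_{1}h_{2}\in L_{1}\left( \left[ 0,1\right] \right) $ are exactly what guarantees that every integral appearing above is finite, so no convergence difficulty arises and the termwise integration is valid.
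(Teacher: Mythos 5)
Your argument is correct and is the standard derivation: the paper itself states this theorem as a quoted result from \cite{SAR} without reproducing a proof, and your route (multiply the two $h$-convexity bounds pointwise, integrate over $t\in[0,1]$, and use the reflection $u=1-t$ to pair the four terms into $M(a,b)$ and $N(a,b)$) is exactly the argument given in that reference. The only cosmetic remark is that finiteness of the cross integral $\int_{0}^{1}h_{1}(t)h_{2}(1-t)\,dt$ is not literally implied by $h_{1}h_{2}\in L_{1}([0,1])$, but since all integrands are nonnegative the inequality holds in $[0,\infty]$ in any case, so nothing is lost.
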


Motivated by the information given above, main purpose of this paper is to
give some inequalities under the special assumptions of $h-$convex functions
by using fairly elementary analysis. We also give some applications to
special means. Throughout the paper we will imply $M(a,b)=f(a)g(a)+f(b)g(b)$
and $N(a,b)=f(a)g(b)+f(b)g(a).$

\section{MAIN RESULTS}

We will start with the following result for $\left( h-s\right) _{1}-$convex
functions.

\begin{theorem}
\label{t1}Let $f,g\in SX(\left( h-s\right) _{1},I)$ and $h$ is a positive
function. If $fg\in L_{1}\left[ a,b\right] ,$ then we have the following
inequality;%
\begin{eqnarray}
&&\frac{1}{b-a}\dint\limits_{a}^{b}f^{\frac{x-a}{b-a}}(x)g^{\frac{b-z}{b-a}%
}(x)dx  \label{2.1} \\
&\leq &\frac{f(b)}{\left( b-a\right) ^{2}}\dint\limits_{a}^{b}\left(
x-a\right) h^{s}\left( \frac{x-a}{b-a}\right) dx+\frac{f(a)}{\left(
b-a\right) ^{2}}\dint\limits_{a}^{b}\left( x-a\right) \left( 1-h^{s}\left(
\frac{x-a}{b-a}\right) \right) dx  \notag \\
&&+\frac{g(b)}{\left( b-a\right) ^{2}}\dint\limits_{a}^{b}\left( b-x\right)
h^{s}\left( \frac{x-a}{b-a}\right) dx+\frac{g(a)}{\left( b-a\right) ^{2}}%
\dint\limits_{a}^{b}\left( b-x\right) \left( 1-h^{s}\left( \frac{x-a}{b-a}%
\right) \right) dx  \notag
\end{eqnarray}%
for all $s\in \left( 0,1\right] .$
\end{theorem}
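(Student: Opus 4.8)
The plan is to exploit the fractional exponents on the left-hand side, which secretly encode a weighted geometric mean. For fixed $x\in[a,b]$ put $t=\frac{x-a}{b-a}$, so that $1-t=\frac{b-x}{b-a}$ and $t\in[0,1]$. Then the integrand $f^{\frac{x-a}{b-a}}(x)\,g^{\frac{b-x}{b-a}}(x)$ (reading the exponent of $g$ as $\frac{b-x}{b-a}$) is exactly $f(x)^{t}g(x)^{1-t}$, a weighted geometric mean of the two nonnegative values $f(x)$ and $g(x)$. The first and decisive step is to apply the weighted arithmetic--geometric mean (Young) inequality
\[
u^{t}v^{1-t}\le t\,u+(1-t)\,v,\qquad u,v\ge 0,\ t\in[0,1],
\]
with $u=f(x)$ and $v=g(x)$, which gives the pointwise bound
\[
f^{\frac{x-a}{b-a}}(x)\,g^{\frac{b-x}{b-a}}(x)\le \frac{x-a}{b-a}\,f(x)+\frac{b-x}{b-a}\,g(x).
\]

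Next I would integrate this inequality over $[a,b]$ and divide by $b-a$; this replaces the left-hand side of the theorem by $\frac{1}{(b-a)^{2}}\int_a^b (x-a)f(x)\,dx+\frac{1}{(b-a)^{2}}\int_a^b (b-x)g(x)\,dx$. The remaining task is to estimate $f(x)$ and $g(x)$ themselves. Here I use the barycentric representation $x=\frac{x-a}{b-a}\,b+\frac{b-x}{b-a}\,a$, i.e.\ $x=tb+(1-t)a$, and invoke the $(h-s)_{1}$-convexity of the defining inequality (21): since $f,g\in SX((h-s)_{1},I)$,
\[
f(x)\le h^{s}\!\left(\tfrac{x-a}{b-a}\right)f(b)+\left(1-h^{s}\!\left(\tfrac{x-a}{b-a}\right)\right)f(a),
\]
and the analogous estimate holds for $g(x)$ with $f$ replaced by $g$.

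Substituting these two convexity bounds into the two integrals, and distributing the weights $(x-a)$ and $(b-x)$ across the $h^{s}$ and $1-h^{s}$ terms, produces precisely the four integrals on the right-hand side of (2.1), with the coefficients $f(b),f(a),g(b),g(a)$ appearing exactly as claimed. Summing the $f$-contribution and the $g$-contribution then completes the argument.

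I expect the main obstacle to be the very first step: recognizing that the unusual fractional powers in the integrand are not an obstruction but an invitation to apply the weighted AM--GM inequality, thereby linearizing the geometric mean into an arithmetic combination on which (21) can act directly. Once that reduction is in place the rest is a routine substitution, and the positivity of $h$ together with the hypothesis $fg\in L_{1}[a,b]$ guarantees that each integral is well defined, so the term-by-term estimates may legitimately be added.
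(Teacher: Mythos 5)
Your proof is correct and is essentially the paper's argument: both rest on the weighted AM--GM (the ``General Cauchy'' inequality $u^{t}v^{1-t}\le tu+(1-t)v$) combined with the defining $(h-s)_{1}$-convexity inequality, and both arrive at the identical pointwise bound $t\bigl[h^{s}(t)f(b)+(1-h^{s}(t))f(a)\bigr]+(1-t)\bigl[h^{s}(t)g(b)+(1-h^{s}(t))g(a)\bigr]$ before integrating and changing variables. The only difference is the order of the two steps --- you apply AM--GM to $f(x)^{t}g(x)^{1-t}$ first and then the convexity bounds, whereas the paper raises the convexity bounds to the powers $t$ and $1-t$ and then applies AM--GM to their product --- which is a cosmetic rearrangement.
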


\begin{proof}
Since $f,g$ are $\left( h-s\right) _{1}-$convex functions on $I$, we have%
\begin{eqnarray*}
f\left( tb+(1-t)a\right) &\leq &h^{s}(t)f\left( b\right) +\left(
1-h^{s}(t)\right) f\left( a\right) \\
g\left( tb+(1-t)a\right) &\leq &h^{s}(t)g\left( b\right) +\left(
1-h^{s}(t)\right) g\left( a\right) .
\end{eqnarray*}%
For $t\in \left[ 0,1\right] ,$ we can write%
\begin{eqnarray*}
f^{t}\left( tb+(1-t)a\right) &\leq &\left[ h^{s}(t)f\left( b\right) +\left(
1-h^{s}(t)\right) f\left( a\right) \right] ^{t} \\
g^{1-t}\left( tb+(1-t)a\right) &\leq &\left[ h^{s}(t)g\left( b\right)
+\left( 1-h^{s}(t)\right) g\left( a\right) \right] ^{1-t}.
\end{eqnarray*}%
By multiplying the above inequalities, we get
\begin{eqnarray}
&&f^{t}\left( tb+(1-t)a\right) g^{1-t}\left( tb+(1-t)a\right)  \label{1} \\
&\leq &\left[ h^{s}(t)f\left( b\right) +\left( 1-h^{s}(t)\right) f\left(
a\right) \right] ^{t}\left[ h^{s}(t)g\left( b\right) +\left(
1-h^{s}(t)\right) g\left( a\right) \right] ^{1-t}.  \notag
\end{eqnarray}%
Recall the General Cauchy Inequality (see \cite{T}, Theorem 3.1), let $%
\alpha $ and $\beta $ be positive real numbers satisfying $\alpha +\beta =1$%
. Then for every positive real numbers $x$ and $y$, we always have%
\begin{equation*}
\alpha x+\beta y\geq x^{\alpha }y^{\beta }.
\end{equation*}%
By applying General Cauchy Inequality to the right hand side of the
inequality (\ref{1}), we have%
\begin{eqnarray*}
&&f^{t}\left( tb+(1-t)a\right) g^{1-t}\left( tb+(1-t)a\right) \\
&\leq &t\left[ h^{s}(t)f\left( b\right) +\left( 1-h^{s}(t)\right) f\left(
a\right) \right] +(1-t)\left[ h^{s}(t)g\left( b\right) +\left(
1-h^{s}(t)\right) g\left( a\right) \right] .
\end{eqnarray*}%
By integrating the resulting inequality with respect to $t,$ over $\left[ 0,1%
\right] ,$ we obtain%
\begin{eqnarray*}
&&\int\limits_{0}^{1}f^{t}\left( tb+(1-t)a\right) g^{1-t}\left(
tb+(1-t)a\right) dt \\
&\leq &\int\limits_{0}^{1}t\left[ h^{s}(t)f\left( b\right) +\left(
1-h^{s}(t)\right) f\left( a\right) \right] dt+\int\limits_{0}^{1}(1-t)\left[
h^{s}(t)g\left( b\right) +\left( 1-h^{s}(t)\right) g\left( a\right) \right]
dt.
\end{eqnarray*}%
Computing the above integrals and by changing of the variable
$tb+(1-t)a=x,$ $(b-a)dt=dx$, we get the desired result.
\end{proof}

\begin{corollary}
\bigskip (1) In Theorem \ref{t1}, if we choose $x=a$, we get%
\begin{equation*}
\frac{1}{b-a}\dint\limits_{a}^{b}g\left( x\right) dx\leq g\left( b\right)
h^{s}\left( 0\right) +g(a)\left( 1-h^{s}\left( 0\right) \right)
\end{equation*}%
(2) In Theorem \ref{t1}, if we choose $x=b$, we get%
\begin{equation*}
\frac{1}{b-a}\dint\limits_{a}^{b}f\left( x\right) dx\leq f\left( b\right)
h^{s}\left( 1\right) +f\left( a\right) \left( 1-h^{s}\left( 1\right) \right)
\end{equation*}%
(3) In Theorem \ref{t1}, if we choose $x=\frac{a+b}{2}$, we get%
\begin{equation*}
\frac{1}{b-a}\dint\limits_{a}^{b}\sqrt{f\left( x\right) g\left( x\right) }%
dx\leq \frac{f\left( b\right) +g\left( b\right) }{2}h^{s}\left( \frac{1}{2}%
\right) +\frac{f\left( a\right) +g\left( a\right) }{2}\left( 1-h^{s}\left(
\frac{1}{2}\right) \right) .
\end{equation*}
\end{corollary}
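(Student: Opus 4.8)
The plan is to obtain all three parts by a single device: freeze the parameter $\tau:=\frac{x-a}{b-a}\in[0,1]$ in the integral inequality (\ref{2.1}) at the value forced by the stated choice of $x$, namely $\tau=0$ for $x=a$, $\tau=1$ for $x=b$, and $\tau=\tfrac12$ for $x=\tfrac{a+b}{2}$. The only quantities that genuinely depend on $\tau$ are the two coupled exponents $\frac{x-a}{b-a}$ and $\frac{b-x}{b-a}$ in the integrand on the left and the argument $h^{s}\!\left(\frac{x-a}{b-a}\right)$ on the right; once $\tau$ is fixed, the right-hand side loses its $\tau$-dependence inside $h^{s}$ and the four weight integrals become elementary.

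I would treat part (3) first, since it is the model case. At $\tau=\tfrac12$ both exponents equal $\tfrac12$, so the left integrand collapses to $[f(x)g(x)]^{1/2}=\sqrt{f(x)g(x)}$ and the left side becomes $\frac{1}{b-a}\int_a^b \sqrt{f(x)g(x)}\,dx$. On the right $h^{s}$ is frozen at $h^{s}(\tfrac12)$, and using the elementary evaluations
\[
\frac{1}{(b-a)^2}\int_a^b (x-a)\,dx=\frac{1}{(b-a)^2}\int_a^b (b-x)\,dx=\frac12,
\]
the four terms regroup as $\frac{f(b)+g(b)}{2}\,h^{s}(\tfrac12)+\frac{f(a)+g(a)}{2}\bigl(1-h^{s}(\tfrac12)\bigr)$, which is exactly the asserted bound.

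Parts (1) and (2) are the degenerate endpoints of the same computation. At $\tau=0$ the factor $f^{(x-a)/(b-a)}$ reduces to $1$, leaving the single factor $g$ under the average, so the left side is $\frac{1}{b-a}\int_a^b g(x)\,dx$; the companion $f$-block on the right carries the exponent that has degenerated and is dropped, while $h^{s}$ is frozen at $h^{s}(0)$, leaving $g(b)h^{s}(0)+g(a)\bigl(1-h^{s}(0)\bigr)$. Symmetrically, at $\tau=1$ the factor $g^{(b-x)/(b-a)}$ reduces to $1$, the $g$-block is dropped, and one is left with $\frac{1}{b-a}\int_a^b f(x)\,dx\le f(b)h^{s}(1)+f(a)\bigl(1-h^{s}(1)\bigr)$. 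I expect the only delicate point to be this left-hand bookkeeping: one must check that fixing $\tau$ collapses exactly one of the two coupled powers to unity (or, for $\tau=\tfrac12$, both to $\tfrac12$), so that precisely the correct single factor $g$, $f$, or $\sqrt{fg}$ survives under the integral and is paired with the matching block on the right. The weight integrals and the regrouping of constants are then routine. As a cross-check, each right-hand side also arises by evaluating the pointwise inequality displayed just after the General Cauchy step in the proof of Theorem \ref{t1} at $t=0,1,\tfrac12$, which reproduces the three bounds above.
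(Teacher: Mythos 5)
Your central device---freezing $\tau=\frac{x-a}{b-a}$ at a chosen value in inequality (\ref{2.1})---is not a legitimate operation, and this is where the proof fails. In (\ref{2.1}) the letter $x$ is the \emph{bound} variable of integration: $\tau$ sweeps all of $[0,1]$ as the integrals are formed, and no version of Theorem \ref{t1} allows $\tau$ to be pinned at $0$, $1$ or $\tfrac12$ in the exponents and inside $h^{s}$ while $f(x),g(x)$ continue to range over $[a,b]$. Your own cross-check exposes the problem: evaluating the pointwise inequality after the General Cauchy step at a fixed $t$ yields a statement about the single point $tb+(1-t)a$ only; at $t=0$ it reads $g(a)\leq h^{s}(0)g(b)+(1-h^{s}(0))g(a)$ and at $t=1$ it reads $f(b)\leq h^{s}(1)f(b)+(1-h^{s}(1))f(a)$ --- no integral appears on the left, and there is no passage from these point statements to $\frac{1}{b-a}\int_{a}^{b}g$ or $\frac{1}{b-a}\int_{a}^{b}f$. (To be fair, this is exactly the reading the paper itself intends by ``choose $x=a$'', so you have faithfully reconstructed the source's derivation; but neither version is a proof.) There is also an internal inconsistency in your bookkeeping: in part (3) you \emph{average} the weights, $\frac{1}{(b-a)^{2}}\int_{a}^{b}(x-a)\,dx=\tfrac12$ (which happens to agree with freezing them at the midpoint, the weights being linear), whereas the constants claimed in parts (1)--(2) require \emph{freezing} those same weights at $x=a$ resp.\ $x=b$. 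With averaged weights, part (1) would come out as $\tfrac12\left[ f(b)h^{s}(0)+f(a)(1-h^{s}(0))\right] +\tfrac12\left[ g(b)h^{s}(0)+g(a)(1-h^{s}(0))\right]$, and the $f$-block cannot simply be ``dropped'': deleting a nonnegative term from an upper bound strengthens the assertion rather than deriving it.

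The gap is not repairable, because parts (1) and (2) are false as stated in the paper's own model case. Take $h(t)=t$ and $s=1$ (precisely the specialization the paper feeds into Theorem \ref{t1} in Proposition \ref{p1}), so that $SX((h-s)_{1},I)$ contains every nonnegative convex function. For $g(x)=x-a$, part (1) claims $\frac{1}{b-a}\int_{a}^{b}(x-a)\,dx=\frac{b-a}{2}\leq g(b)h^{s}(0)+g(a)(1-h^{s}(0))=0$, and for $f(x)=b-x$, part (2) claims $\frac{b-a}{2}\leq f(b)h^{s}(1)+f(a)(1-h^{s}(1))=0$; both are absurd. (If one instead insists that $h(0)>0$, the $t=0$ instance of (\ref{21}) forces $f(y)\leq f(x)$ for all $x,y$, so the class collapses to constant functions and the corollary becomes true but vacuous.) What your method \emph{can} legitimately produce is obtained by integrating the $(h-s)_{1}$-convexity inequality over $t\in[0,1]$: $\frac{1}{b-a}\int_{a}^{b}g(x)\,dx\leq g(b)\int_{0}^{1}h^{s}(t)\,dt+g(a)\left( 1-\int_{0}^{1}h^{s}(t)\,dt\right)$, with the mean value $\int_{0}^{1}h^{s}(t)\,dt$ in place of the frozen values $h^{s}(0)$, $h^{s}(1)$; likewise a valid analogue of part (3) follows from $\sqrt{fg}\leq\tfrac12(f+g)$ with $\int_{0}^{1}h^{s}(t)\,dt$ replacing $h^{s}\left(\tfrac12\right)$. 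Your part (3) computation reproduces the paper's formula only because the linear weights make midpoint-freezing and averaging coincide; it is a formal coincidence, not a consequence of Theorem \ref{t1}.
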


A similar result will obtain in the following Theorem for $\left( h-s\right)
_{2}-$convex functions.

\begin{theorem}
\label{t2}Let $f,g\in SX(\left( h-s\right) _{2},I)$ and $h$ is a positive
function. If $fg\in L_{1}\left[ a,b\right] ,$ then we have the following
inequality;%
\begin{eqnarray}
&&\frac{1}{b-a}\dint\limits_{a}^{b}f^{\frac{x-a}{b-a}}(x)g^{\frac{b-z}{b-a}%
}(x)dx  \label{2.2} \\
&\leq &\frac{f(b)}{\left( b-a\right) ^{2}}\dint\limits_{a}^{b}\left(
x-a\right) h^{s}\left( \frac{x-a}{b-a}\right) dx+\frac{f(a)}{\left(
b-a\right) ^{2}}\dint\limits_{a}^{b}\left( x-a\right) h^{s}\left( \frac{b-x}{%
b-a}\right) dx  \notag \\
&&+\frac{g(b)}{\left( b-a\right) ^{2}}\dint\limits_{a}^{b}\left( b-x\right)
h^{s}\left( \frac{x-a}{b-a}\right) dx+\frac{g(a)}{\left( b-a\right) ^{2}}%
\dint\limits_{a}^{b}\left( b-x\right) h^{s}\left( \frac{b-x}{b-a}\right) dx
\notag
\end{eqnarray}%
for all $s\in \left( 0,1\right] .$
\end{theorem}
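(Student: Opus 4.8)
The plan is to mirror the argument used for Theorem \ref{t1}, replacing the first-sense inequality by the second-sense defining inequality (\ref{22}). First I would record, for the endpoints $u=b$ and $v=a$ and any $t\in\left[ 0,1\right] $, the two instances of $(h-s)_{2}$-convexity
\begin{eqnarray*}
f\left( tb+(1-t)a\right) &\leq& h^{s}(t)f(b)+h^{s}(1-t)f(a),\\
g\left( tb+(1-t)a\right) &\leq& h^{s}(t)g(b)+h^{s}(1-t)g(a).
\end{eqnarray*}
The crucial structural difference from the first-sense case is that the coefficient of the second endpoint is now $h^{s}(1-t)$ rather than $1-h^{s}(t)$; this is exactly what will later generate the two distinct integrands $h^{s}\left( \frac{x-a}{b-a}\right) $ and $h^{s}\left( \frac{b-x}{b-a}\right) $ on the right-hand side of (\ref{2.2}).

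Next I would raise the $f$-inequality to the power $t$ and the $g$-inequality to the power $1-t$ (legitimate since $f,g$ are nonnegative and $h$ is positive, so both bracketed quantities are nonnegative), and multiply the results to obtain an analogue of (\ref{1}),
\begin{equation*}
f^{t}\left( tb+(1-t)a\right) g^{1-t}\left( tb+(1-t)a\right) \leq \left[ h^{s}(t)f(b)+h^{s}(1-t)f(a)\right] ^{t}\left[ h^{s}(t)g(b)+h^{s}(1-t)g(a)\right] ^{1-t}.
\end{equation*}
I would then apply the General Cauchy Inequality with the weights $\alpha =t$, $\beta =1-t$ (so $\alpha +\beta =1$) to linearize the product on the right, giving
\begin{equation*}
f^{t}\left( tb+(1-t)a\right) g^{1-t}\left( tb+(1-t)a\right) \leq t\left[ h^{s}(t)f(b)+h^{s}(1-t)f(a)\right] +(1-t)\left[ h^{s}(t)g(b)+h^{s}(1-t)g(a)\right] .
\end{equation*}

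Finally I would integrate over $t\in \left[ 0,1\right] $ and perform the substitution $x=tb+(1-t)a$, $dx=(b-a)\,dt$, under which $t=\frac{x-a}{b-a}$ and $1-t=\frac{b-x}{b-a}$. The left side becomes $\frac{1}{b-a}\int_{a}^{b}f^{\frac{x-a}{b-a}}(x)g^{\frac{b-x}{b-a}}(x)\,dx$, and the four summands on the right map respectively to the four integrals displayed in (\ref{2.2}): the terms $t\,h^{s}(t)f(b)$ and $(1-t)h^{s}(t)g(b)$ produce the integrands carrying $h^{s}\left( \frac{x-a}{b-a}\right) $, while $t\,h^{s}(1-t)f(a)$ and $(1-t)h^{s}(1-t)g(a)$ produce those carrying $h^{s}\left( \frac{b-x}{b-a}\right) $, each picking up the factor $\frac{1}{(b-a)^{2}}$ from the Jacobian. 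The argument is entirely elementary; the only point requiring care is bookkeeping the correspondence between the second-sense coefficients $h^{s}(t)$, $h^{s}(1-t)$ and the linear weights $t$, $1-t$ so that each of the four product terms lands in its correct integral, together with ensuring the bracketed quantities are strictly positive (or handling the degenerate boundary case $f(a)=f(b)=0$ separately by continuity) so that the General Cauchy Inequality applies.
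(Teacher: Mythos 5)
Your proposal is correct and is exactly the argument the paper intends: the paper's own proof of Theorem \ref{t2} simply says it follows from the proof of Theorem \ref{t1} with the second-sense convexity inequality in place of the first-sense one, and you have carried out precisely that substitution (power $t$ and $1-t$, General Cauchy Inequality, integration, change of variable). No discrepancy to report.
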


\begin{proof}
The proof is immediately follows from the proof of the above Theorem, but
now $f,g$ are $\left( h-s\right) _{2}-$convex functions on $I$.
\end{proof}

\bigskip

\begin{corollary}
(1)In Theorem \ref{t2}, if we choose $x=a$, we get%
\begin{equation*}
\frac{1}{b-a}\dint\limits_{a}^{b}g\left( x\right) dx\leq g\left( b\right)
h^{s}\left( 0\right) +g\left( a\right) h^{s}\left( 1\right)
\end{equation*}%
(2) In Theorem \ref{t2}, if we choose $x=b$, we get%
\begin{equation*}
\frac{1}{b-a}\dint\limits_{a}^{b}f\left( x\right) dx\leq f\left( b\right)
h^{s}\left( 1\right) +f\left( a\right) h^{s}\left( 0\right)
\end{equation*}%
(3) In Theorem \ref{t2}, if we choose $x=\frac{a+b}{2}$, we get%
\begin{equation*}
\frac{1}{b-a}\dint\limits_{a}^{b}\sqrt{f\left( x\right) g\left( x\right) }%
dx\leq \frac{1}{2}h^{s}\left( \frac{1}{2}\right) \left( f\left( a\right)
+f\left( b\right) +g\left( a\right) +g\left( b\right) \right) .
\end{equation*}
\end{corollary}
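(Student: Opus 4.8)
The plan is to derive all three assertions as direct specializations of inequality (\ref{2.2}) in Theorem \ref{t2}; no fresh convexity argument is required, and the only genuine computation is the evaluation of two elementary polynomial integrals. The key preliminary observation is that, under the substitution $t=\frac{x-a}{b-a}$ used in the proof of Theorem \ref{t2}, the two exponents occurring on the left of (\ref{2.2}) --- namely $\frac{x-a}{b-a}$ on $f$ and $\frac{b-x}{b-a}$ on $g$ --- are both controlled by the single parameter $t$, and the three choices $x=a$, $x=b$, $x=\frac{a+b}{2}$ correspond respectively to $t=0$, $t=1$, $t=\frac{1}{2}$. Freezing $t$ at such a value simultaneously collapses the left-hand integrand and turns each argument $h^{s}(\frac{x-a}{b-a})$, $h^{s}(\frac{b-x}{b-a})$ on the right into a constant that can be pulled outside its integral.

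I would treat part (3) first, since it is the cleanest. Setting $t=\frac{1}{2}$ reduces the integrand on the left to $f^{1/2}(x)g^{1/2}(x)=\sqrt{f(x)g(x)}$, while each of the four $h^{s}$-arguments on the right becomes $h^{s}(\frac{1}{2})$. After extracting this constant, the surviving integrals are exactly $\int_{a}^{b}(x-a)\,dx=\int_{a}^{b}(b-x)\,dx=\frac{(b-a)^{2}}{2}$, which cancel the prefactor $\frac{1}{(b-a)^{2}}$ and leave a factor $\frac{1}{2}$ on each of the four terms carrying $f(b)$, $f(a)$, $g(b)$, $g(a)$. Summing produces precisely $\frac{1}{2}h^{s}(\frac{1}{2})\bigl(f(a)+f(b)+g(a)+g(b)\bigr)$, which is the stated bound.

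Parts (1) and (2) follow by the identical recipe with $t=0$ and $t=1$ respectively. At $t=0$ the exponent of $f$ drops to $0$ and that of $g$ rises to $1$, so the left-hand integrand reduces to $g(x)$ and the frozen weights are $h^{s}(0)$ and $h^{s}(1)$; collecting the resulting terms gives the one-function estimate with bound $g(b)h^{s}(0)+g(a)h^{s}(1)$. Part (2) is the mirror image under $x=b$, i.e. $t=1$: the integrand becomes $f(x)$ and the endpoint roles are interchanged, yielding $f(b)h^{s}(1)+f(a)h^{s}(0)$.

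I do not anticipate any analytic difficulty. Each inequality is inherited verbatim from (\ref{2.2}), and the $h^{s}$ factors inside the integrals are merely replaced by exact constant values once $t$ is fixed, so there is nothing to prove at the level of the inequality itself. The single point that deserves care is the bookkeeping --- keeping track of which of the four terms on the right of (\ref{2.2}) survive each specialization and verifying that the two polynomial integrals each equal $\frac{(b-a)^{2}}{2}$ --- so the only (mild) obstacle is organizational rather than conceptual.
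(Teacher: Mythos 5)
Your recipe, as you state it, proves part (3) but fails for parts (1) and (2), and the claim that they ``follow by the identical recipe'' is the gap. In part (3) you freeze only the arguments of $h^{s}$ (and the exponents on the left) and then \emph{integrate} the linear weights, using $\int_{a}^{b}\left( x-a\right) dx=\int_{a}^{b}\left( b-x\right) dx=\frac{\left( b-a\right) ^{2}}{2}$ to produce the factor $\frac{1}{2}$. Apply that same recipe at $t=0$: all four terms on the right of (\ref{2.2}) survive, and you get
\begin{equation*}
\frac{1}{2}\left[ h^{s}\left( 0\right) \left( f\left( b\right) +g\left(
b\right) \right) +h^{s}\left( 1\right) \left( f\left( a\right) +g\left(
a\right) \right) \right] ,
\end{equation*}
not the stated bound $g\left( b\right) h^{s}\left( 0\right) +g\left( a\right) h^{s}\left( 1\right)$: the $f$-terms do not disappear and every surviving term carries a spurious $\frac{1}{2}$ (symmetrically at $t=1$ for part (2)). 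What actually kills the $f$-terms is the vanishing of the \emph{linear} weights, which you left un-frozen. In the $t$-parametrization of the proof of Theorem \ref{t2} the right-hand side is $\int_{0}^{1}\left\{ t\left[ h^{s}(t)f(b)+h^{s}(1-t)f(a)\right] +(1-t)\left[ h^{s}(t)g(b)+h^{s}(1-t)g(a)\right] \right\} dt$, so the factors $t$ and $1-t$ (equivalently $\frac{x-a}{b-a}$ and $\frac{b-x}{b-a}$ in the weights $(x-a)$, $(b-x)$) are part of the same parameter being specialized and must be frozen along with the $h^{s}$ arguments. At $t=0$ the factor $t=0$ annihilates both $f$-terms while $1-t=1$ gives the $g$-terms full (not half) weight, yielding exactly $g(b)h^{s}(0)+g(a)h^{s}(1)$; at $t=\frac{1}{2}$ freezing and averaging happen to coincide, since the frozen weight $\frac{b-a}{2}$ times $(b-a)$ equals $\int_{a}^{b}(x-a)dx=\frac{(b-a)^{2}}{2}$ --- which is precisely why your part (3) comes out right and masks the inconsistency.

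For context: the paper supplies no proof of this corollary at all, and its prescription ``choose $x=a$'' is itself only a formal substitution, since $x$ is the integration dummy --- freezing $t$ in the proof of Theorem \ref{t2} rigorously yields only the pointwise statement $g(a)\leq g(b)h^{s}(0)+g(a)h^{s}(1)$, not an inequality for $\frac{1}{b-a}\int_{a}^{b}g(x)dx$. So you cannot be faulted for the corollary's intrinsic looseness; but within the formal game the paper is playing, the fix you need is to freeze the \emph{entire} $t$-dependence of each term (weights included), not just the $h^{s}$ arguments, and to verify the bookkeeping separately at $t=0$ and $t=1$ rather than appealing to the $t=\frac{1}{2}$ computation.
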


\begin{theorem}
\label{t3}Let $f,g\in SX(\left( h-s\right) _{1},I)$, $h$ is
super-multiplicative on $I$. If $f,g,fg\in L_{1}\left[ a,b\right] $
and $h\in
L_{1}\left[ 0,1\right] ,$ then we have the following inequality;%
\begin{eqnarray}
&&  \label{2.3} \\
&&\frac{1}{\left( b-a\right) ^{2}}\dint\limits_{a}^{b}\dint\limits_{a}^{b}%
\dint\limits_{0}^{1}f\left( tx+\left( 1-t\right) y\right) g(tx+\left(
1-t\right) y)dtdxdy  \notag \\
&\leq &\left( \frac{1}{b-a}\dint\limits_{a}^{b}f(x)g(x)dx\right) \left(
\dint\limits_{0}^{1}h^{s}(t^{2})dt\right) +\left( \frac{1}{b-a}%
\dint\limits_{a}^{b}f(y)g(y)dy\right) \left( \dint\limits_{0}^{1}\left(
1-h^{s}(t)\right) ^{2}dt\right)  \notag \\
&&+\left( \frac{1}{b-a}\dint\limits_{a}^{b}\dint\limits_{a}^{b}\left[
f(y)g(x)+f(x)g(y)\right] dxdy\right) \left( \dint\limits_{0}^{1}\left(
h^{s}(t)-h^{s}(t^{2})\right) dt\right)  \notag
\end{eqnarray}%
for all $s\in \left( 0,1\right] ,$ $x,y\in I\subseteq
\mathbb{R}
.$
\end{theorem}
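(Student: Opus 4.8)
The plan is to apply the two defining $(h-s)_1$-convexity inequalities at a \emph{generic} pair of points $x,y\in[a,b]$ (not just the endpoints $a,b$), multiply them, and then integrate the resulting pointwise estimate over the box $[0,1]\times[a,b]\times[a,b]$.

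First I would record, for $t\in[0,1]$ and $x,y\in I$,
\[
f(tx+(1-t)y)\le h^s(t)f(x)+(1-h^s(t))f(y),\qquad g(tx+(1-t)y)\le h^s(t)g(x)+(1-h^s(t))g(y).
\]
Since $f,g$ are nonnegative, each left-hand side is nonnegative and is dominated by its right-hand side, which is therefore also nonnegative; hence the two inequalities may be multiplied termwise. Writing $(\cdot)$ for the argument $tx+(1-t)y$ and expanding the product yields the pointwise bound
\[
f(\cdot)g(\cdot)\le h^{2s}(t)f(x)g(x)+(1-h^s(t))^2f(y)g(y)+h^s(t)\bigl(1-h^s(t)\bigr)\bigl[f(x)g(y)+f(y)g(x)\bigr].
\]

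Next I would bring in super-multiplicativity of $h$. Setting $x=y=t$ in $h(xy)\ge h(x)h(y)$ gives $h(t^2)\ge h(t)^2$, and raising to the power $s>0$ gives $h^s(t^2)\ge h^{2s}(t)$. This is precisely the device that converts the $h^{2s}(t)$ produced by squaring into the $h^s(t^2)$ appearing in the statement: in the leading term, $f(x)g(x)\ge 0$ lets me replace $h^{2s}(t)$ by its upper bound $h^s(t^2)$, and the same substitution is used to rewrite the cross-term coefficient $h^s(t)(1-h^s(t))=h^s(t)-h^{2s}(t)$ in the form $h^s(t)-h^s(t^2)$.

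Finally I would integrate this bound over $t\in[0,1]$ and $x,y\in[a,b]$ and divide by $(b-a)^2$. Each term on the right factors as a function of $t$ times a function of $(x,y)$, so the triple integrals separate into a one-dimensional $t$-integral times a spatial integral; in the first two terms the spatial integrand depends on only one variable, so integrating out the other contributes a factor $(b-a)$ and leaves the single averages $\tfrac1{b-a}\int_a^b fg$, while the cross term keeps both spatial integrations. Matching the pieces against the asserted inequality is then routine bookkeeping. I expect the main obstacle to be the super-multiplicativity step: one must verify that the replacement $h^{2s}(t)\to h^s(t^2)$ is applied in a sign-consistent way across all three terms, the delicate case being the cross term, where $h^{2s}(t)$ enters with a negative coefficient so the direction of the bound must be tracked carefully.
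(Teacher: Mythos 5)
You follow exactly the route of the paper's own proof: apply the two defining $(h-s)_{1}$-convexity inequalities at a generic pair $x,y$, multiply them termwise, invoke super-multiplicativity in the form $h(t^{2})\geq h(t)h(t)$, and then integrate over $[0,1]\times \lbrack a,b]\times \lbrack a,b]$ and separate the integrals. The nonnegativity remark justifying the termwise multiplication and the Fubini bookkeeping at the end are fine and match the paper.

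However, the obstacle you flag in your last sentence is a genuine gap, not a matter of careful tracking, and it cannot be closed by this argument. Super-multiplicativity gives $h^{s}(t^{2})\geq h^{2s}(t)$, which correctly \emph{enlarges} the coefficient of $f(x)g(x)$ from $h^{2s}(t)$ to $h^{s}(t^{2})$; but the very same inequality forces
\begin{equation*}
h^{s}(t)-h^{s}(t^{2})\leq h^{s}(t)-h^{2s}(t),
\end{equation*}
so replacing the true cross-term coefficient $h^{s}(t)\left( 1-h^{s}(t)\right) =h^{s}(t)-h^{2s}(t)$ by $h^{s}(t)-h^{s}(t^{2})$ \emph{decreases} a term that multiplies the nonnegative quantity $f(x)g(y)+f(y)g(x)$ --- the wrong direction for an upper bound. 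Concretely, the asserted right-hand side minus the true expansion of the product equals
\begin{equation*}
\left( h^{s}(t^{2})-h^{2s}(t)\right) \left( f(x)g(x)-f(x)g(y)-f(y)g(x)\right) ,
\end{equation*}
which has no fixed sign, so the claimed pointwise bound does not follow. You should know that the paper's proof performs exactly this substitution in a single unjustified line (``by multiplying the above inequalities and since $h$ is super-multiplicative, we get\dots''), so the defect you sensed is inherited from the source: as written, neither your argument nor the paper's establishes the stated inequality except when $h$ is multiplicative, in which case $h(t^{2})=h^{2}(t)$ and all three replacements are equalities.
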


\begin{proof}
Since $f,g$ are $\left( h-s\right) _{1}-$convex functions on $I$, we have%
\begin{eqnarray*}
f\left( tx+\left( 1-t\right) y\right) &\leq &h^{s}(t)f(x)+\left(
1-h^{s}\left( t\right) \right) f(y) \\
g(tx+\left( 1-t\right) y) &\leq &h^{s}(t)g(x)+\left( 1-h^{s}\left( t\right)
\right) g(y).
\end{eqnarray*}%
By multiplying the above inequalities and since $h$ is
super-multiplicative
function, we get%
\begin{eqnarray*}
&&f\left( tx+\left( 1-t\right) y\right) g(tx+\left( 1-t\right) y) \\
&\leq &h^{s}(t^{2})f(x)g(x)+\left( 1-h^{s}(t)\right) ^{2}f(y)g(y) \\
&&+\left( h^{s}(t)-h^{s}(t^{2})\right) \left[ f(y)g(x)+f(x)g(y)\right] .
\end{eqnarray*}%
By integrating the resulting inequality with respect to $t$ over $\left[ 0,1%
\right] $ and with respect to $x,y$ over $\left[ a,b\right] \times \left[ a,b%
\right] ,$ we have%
\begin{eqnarray*}
&&\frac{1}{b-a}\dint\limits_{a}^{b}\dint\limits_{a}^{b}\dint\limits_{0}^{1}f%
\left( tx+\left( 1-t\right) y\right) g(tx+\left( 1-t\right) y)dtdxdy \\
&\leq &\left( \dint\limits_{a}^{b}f(x)g(x)dx\right) \left(
\dint\limits_{0}^{1}h^{s}(t^{2})dt\right) +\left(
\dint\limits_{a}^{b}f(y)g(y)dy\right) \left( \dint\limits_{0}^{1}\left(
1-h^{s}(t)\right) ^{2}dt\right) \\
&&+\dint\limits_{a}^{b}\dint\limits_{a}^{b}\left[ f(y)g(x)+f(x)g(y)\right]
dxdy\dint\limits_{0}^{1}\left( h^{s}(t)-h^{s}(t^{2})\right) dt
\end{eqnarray*}%
By dividing $\frac{1}{b-a},$ the proof is completed.
\end{proof}

\begin{remark}
\bigskip In Theorem \ref{t3}, if we take $h\left( t\right) =t$ and $s=1,$
then we get%
\begin{eqnarray*}
&&\frac{1}{\left( b-a\right) ^{2}}\dint\limits_{a}^{b}\dint\limits_{a}^{b}%
\dint\limits_{0}^{1}f\left( tx+\left( 1-t\right) y\right) g(tx+\left(
1-t\right) y)dtdxdy \\
&\leq &\frac{1}{3\left( b-a\right) }\dint\limits_{a}^{b}f(x)g(x)dx+\frac{1}{%
3\left( b-a\right) }\dint\limits_{a}^{b}f(y)g(y)dy+\frac{1}{6\left(
b-a\right) }\dint\limits_{a}^{b}\dint\limits_{a}^{b}N\left( x,y\right) dxdy.
\end{eqnarray*}
\end{remark}

\begin{theorem}
\label{t4}Let $f,g\in SX(\left( h-s\right) _{2},I)$, $h$ is
super-multiplicative and $f,g$ be similarly ordered functions on $I$. If $%
f,g,fg\in L_{1}\left[ a,b\right] $ and $h\in L_{1}\left[ 0,1\right] ,$ then
we have the following inequality;%
\begin{eqnarray*}
&&\frac{1}{\left( b-a\right) ^{2}}\dint\limits_{a}^{b}\dint\limits_{a}^{b}%
\dint\limits_{0}^{1}f\left( tx+\left( 1-t\right) y\right) g(tx+\left(
1-t\right) y)dtdxdy \\
&\leq &\left( \frac{1}{b-a}\dint\limits_{a}^{b}f(x)g(x)dx\right) \left(
\dint\limits_{0}^{1}\left( h^{s}(t^{2})+h^{s}(t-t^{2})\right) dt\right) \\
&&+\left( \frac{1}{b-a}\dint\limits_{a}^{b}f(y)g(y)dy\right) \left(
\dint\limits_{0}^{1}\left( h^{s}\left( \left( 1-t\right) ^{2}\right)
+h^{s}(t-t^{2})\right) dt\right) \\
&=&\left( \frac{1}{b-a}\dint\limits_{a}^{b}f(x)g(x)dx\right) \left(
\dint\limits_{0}^{1}\left( h^{s}(t)+h^{s}(1-t)\right) ^{2}dt\right)
\end{eqnarray*}%
for all $s\in \left( 0,1\right] ,$ $x,y\in I\subseteq
\mathbb{R}
.$
\end{theorem}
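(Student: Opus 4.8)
The plan is to mirror the argument of Theorem \ref{t3}, replacing the single super-multiplicativity estimate by three such estimates and then invoking the similarly-ordered hypothesis to collapse the cross terms. First I would write down the two defining inequalities for $(h-s)_2$-convexity,
\[
f(tx+(1-t)y)\leq h^{s}(t)f(x)+h^{s}(1-t)f(y),\qquad g(tx+(1-t)y)\leq h^{s}(t)g(x)+h^{s}(1-t)g(y),
\]
and multiply them. Because $f,g$ are nonnegative the inequality direction is preserved, and the right-hand side expands into four terms carrying the coefficients $h^{2s}(t)$, $h^{2s}(1-t)$, and $h^{s}(t)h^{s}(1-t)$ (the last on the cross term $f(x)g(y)+f(y)g(x)$).

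Next I would use that $h$ is super-multiplicative to replace each coefficient by a single-argument expression. Since $h(t^{2})\geq h(t)^{2}$, $h((1-t)^{2})\geq h(1-t)^{2}$ and $h(t-t^{2})=h\big(t(1-t)\big)\geq h(t)h(1-t)$, and since $x\mapsto x^{s}$ is increasing on $[0,\infty)$ for $s>0$, raising to the power $s$ yields $h^{2s}(t)\leq h^{s}(t^{2})$, $h^{2s}(1-t)\leq h^{s}((1-t)^{2})$ and $h^{s}(t)h^{s}(1-t)\leq h^{s}(t-t^{2})$; all three point in the direction needed for an upper bound. This gives
\[
f(tx+(1-t)y)g(tx+(1-t)y)\leq h^{s}(t^{2})f(x)g(x)+h^{s}((1-t)^{2})f(y)g(y)+h^{s}(t-t^{2})\big[f(x)g(y)+f(y)g(x)\big].
\]
Now the similarly-ordered hypothesis enters: expanding $(f(x)-f(y))(g(x)-g(y))\geq 0$ gives $f(x)g(y)+f(y)g(x)\leq f(x)g(x)+f(y)g(y)$, and since the coefficient $h^{s}(t-t^{2})$ is nonnegative I may substitute this bound into the cross term. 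After regrouping, the right-hand side becomes $[h^{s}(t^{2})+h^{s}(t-t^{2})]f(x)g(x)+[h^{s}((1-t)^{2})+h^{s}(t-t^{2})]f(y)g(y)$.

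Finally I would integrate over $t\in[0,1]$ and over $(x,y)\in[a,b]\times[a,b]$ and divide by $(b-a)^{2}$. Since $f(x)g(x)$ is independent of $y$ (and symmetrically for the $f(y)g(y)$ term), each double space-integral factors as $(b-a)\int_a^b fg$, cancelling one power of $b-a$ and producing exactly the two averaged factors $\frac1{b-a}\int_a^b fg$ multiplied by the respective $t$-integrals, which is the stated middle expression. The main obstacle is the concluding equality: under mere super-multiplicativity the sum $\int_0^1[h^{s}(t^{2})+h^{s}(t-t^{2})]\,dt+\int_0^1[h^{s}((1-t)^{2})+h^{s}(t-t^{2})]\,dt$ need not equal $\int_0^1(h^{s}(t)+h^{s}(1-t))^{2}\,dt$. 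The substitution $t\mapsto 1-t$ does give $\int_0^1 h^{s}((1-t)^2)\,dt=\int_0^1 h^{s}(t^2)\,dt$ and $\int_0^1 h^{2s}(1-t)\,dt=\int_0^1 h^{2s}(t)\,dt$, so the two sides collapse to $2\int_0^1 h^{s}(t^2)\,dt+2\int_0^1 h^{s}(t-t^2)\,dt$ and $2\int_0^1 h^{2s}(t)\,dt+2\int_0^1 h^{s}(t)h^{s}(1-t)\,dt$ respectively; these coincide precisely when $h$ is multiplicative (the equality case of super-multiplicativity), where $h^{s}(t^2)=h^{2s}(t)$ and $h^{s}(t-t^2)=h^{s}(t)h^{s}(1-t)$. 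I would therefore read the final ``$=$'' as holding in that multiplicative regime, flagging that otherwise only the middle bound is guaranteed.
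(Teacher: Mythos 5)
Your proof follows essentially the same route as the paper's: multiply the two $(h-s)_{2}$-convexity inequalities, upgrade the coefficients $h^{2s}(t)$, $h^{2s}(1-t)$, $h^{s}(t)h^{s}(1-t)$ to $h^{s}(t^{2})$, $h^{s}((1-t)^{2})$, $h^{s}(t-t^{2})$ via super-multiplicativity, absorb the cross term with the similarly-ordered hypothesis, and integrate over $t$ and over $(x,y)$. Your flag on the concluding ``$=$'' is also well taken: the paper asserts that equality without justification, and under mere super-multiplicativity one only gets $\int_{0}^{1}\left(h^{s}(t^{2})+h^{s}(t-t^{2})\right)dt+\int_{0}^{1}\left(h^{s}((1-t)^{2})+h^{s}(t-t^{2})\right)dt\geq\int_{0}^{1}\left(h^{s}(t)+h^{s}(1-t)\right)^{2}dt$, so the displayed equality (and hence the last line of the theorem) holds only in the multiplicative case, exactly as you observe.
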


\begin{proof}
By a similar argument to the proof of the previous Theorem, since $f,g$ are $%
\left( h-s\right) _{2}-$convex functions on $I$, we have%
\begin{eqnarray*}
f\left( tx+\left( 1-t\right) y\right) &\leq &h^{s}(t)f(x)+h^{s}\left(
1-t\right) f(y) \\
g(tx+\left( 1-t\right) y) &\leq &h^{s}(t)g(x)+h^{s}\left( 1-t\right) g(y).
\end{eqnarray*}%
By multiplying the above inequalities and since $h$ is
super-multiplicative
function, we get%
\begin{eqnarray*}
&&f\left( tx+\left( 1-t\right) y\right) g(tx+\left( 1-t\right) y) \\
&\leq &h^{s}(t^{2})f(x)g(x)+h^{s}\left( \left( 1-t\right) ^{2}\right)
f(y)g(y) \\
&&+h^{s}(t-t^{2})\left[ f(y)g(x)+f(x)g(y)\right] .
\end{eqnarray*}%
Since $f$ and $g$ are similarly ordered functions, we get%
\begin{eqnarray*}
&&f\left( tx+\left( 1-t\right) y\right) g(tx+\left( 1-t\right) y) \\
&\leq &\left( h^{s}(t^{2})+h^{s}(t-t^{2})\right) f(x)g(x)+\left( h^{s}\left(
\left( 1-t\right) ^{2}\right) +h^{s}(t-t^{2})\right) f(y)g(y).
\end{eqnarray*}%
By integrating the resulting inequality with respect to $t$ over $\left[ 0,1%
\right] $ and with respect to $x,y$ over $\left[ a,b\right] \times \left[ a,b%
\right] ,$ we have%
\begin{eqnarray*}
&&\frac{1}{b-a}\dint\limits_{a}^{b}\dint\limits_{a}^{b}\dint\limits_{0}^{1}f%
\left( tx+\left( 1-t\right) y\right) g(tx+\left( 1-t\right) y)dtdxdy \\
&\leq &\left( \dint\limits_{a}^{b}f(x)g(x)dx\right) \left(
\dint\limits_{0}^{1}\left( h^{s}(t^{2})+h^{s}(t-t^{2})\right) dt\right) \\
&&+\left( \dint\limits_{a}^{b}f(y)g(y)dy\right) \left(
\dint\limits_{0}^{1}\left( h^{s}\left( \left( 1-t\right) ^{2}\right)
+h^{s}(t-t^{2})\right) dt\right) \\
&=&\left( \dint\limits_{a}^{b}f(x)g(x)dx\right) \left(
\dint\limits_{0}^{1}\left( h^{s}(t)+h^{s}(1-t)\right) ^{2}dt\right) .
\end{eqnarray*}%
By dividing $\frac{1}{b-a},$ the proof is completed.
\end{proof}

\begin{theorem}
\label{t5}Let $f,g\in SX(\left( h-s\right) _{2},I)$, $h$ is
super-multiplicative and $f,g$ be similarly ordered functions on $I$. If $%
f,g,fg\in L_{1}\left[ a,b\right] $ and $h\in L_{1}\left[ 0,1\right] ,$ then
we have the following inequality;%
\begin{eqnarray*}
&&\frac{1}{\left( b-a\right) ^{2}}\dint\limits_{a}^{b}\dint\limits_{a}^{b}%
\dint\limits_{0}^{1}f\left( tx+\left( 1-t\right) y\right) g(tx+\left(
1-t\right) y)dtdxdy \\
&\leq &\left( \frac{1}{b-a}\dint\limits_{a}^{b}f(x)g(x)dx\right) \left(
\dint\limits_{0}^{1}\left( h^{s}(t^{2})+h^{s}(t-t^{2})\right) dt\right) \\
&&+f\left( \frac{a+b}{2}\right) g\left( \frac{a+b}{2}\right) \left(
\dint\limits_{0}^{1}\left( h^{s}\left( \left( 1-t\right) ^{2}\right)
+h^{s}(t-t^{2})\right) dt\right)
\end{eqnarray*}%
for all $s\in \left( 0,1\right] ,$ $x,y\in I\subseteq
\mathbb{R}
.$
\end{theorem}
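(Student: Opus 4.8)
The plan is to follow the proof of Theorem~\ref{t4} line for line until the point where the two results diverge, and only at the very end to introduce a midpoint (Hermite--Hadamard type) estimate for the second term. First I would invoke the $(h-s)_2$-convexity of $f$ and $g$ to write the two one-sided bounds for $f(tx+(1-t)y)$ and $g(tx+(1-t)y)$, multiply them, and expand the product. The three resulting coefficients are $(h^s(t))^2$, $(h^s(1-t))^2$ and $h^s(t)h^s(1-t)$, multiplying $f(x)g(x)$, $f(y)g(y)$ and $f(x)g(y)+f(y)g(x)$ respectively. Using that $h$ is super-multiplicative together with the monotonicity of $r\mapsto r^s$, I would replace these coefficients by $h^s(t^2)$, $h^s((1-t)^2)$ and $h^s(t-t^2)$, since $h(t)^2\le h(t^2)$, $h(1-t)^2\le h((1-t)^2)$ and $h(t)h(1-t)\le h(t(1-t))=h(t-t^2)$. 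This produces exactly the pointwise inequality already used in Theorem~\ref{t4}.

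Next I would apply the similar-ordering hypothesis: $f$ s.o. $g$ gives $f(x)g(y)+f(y)g(x)\le f(x)g(x)+f(y)g(y)$, so, because the coefficient $h^s(t-t^2)$ is nonnegative, the cross term can be absorbed into the diagonal terms and I obtain
\[
f(tx+(1-t)y)g(tx+(1-t)y)\le \big(h^s(t^2)+h^s(t-t^2)\big)f(x)g(x)+\big(h^s((1-t)^2)+h^s(t-t^2)\big)f(y)g(y).
\]
Integrating in $t$ over $[0,1]$ and in $(x,y)$ over $[a,b]\times[a,b]$, the first summand produces $\big(\tfrac1{b-a}\int_a^b fg\big)\int_0^1\big(h^s(t^2)+h^s(t-t^2)\big)\,dt$ after dividing by $(b-a)^2$, precisely as in Theorem~\ref{t4}.

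The one genuinely new step is the treatment of the second summand. Instead of averaging $f(y)g(y)$ over $[a,b]$ as in Theorem~\ref{t4}, I would bound $\tfrac1{b-a}\int_a^b f(y)g(y)\,dy$ by the midpoint value $f\big(\tfrac{a+b}2\big)g\big(\tfrac{a+b}2\big)$ via a Hermite--Hadamard type midpoint estimate, leaving the factor $\int_0^1\big(h^s((1-t)^2)+h^s(t-t^2)\big)\,dt$ untouched. I expect this midpoint replacement to be the main obstacle: it is the only place where the argument is not a verbatim copy of the previous proof, and it reduces precisely to justifying the inequality $\tfrac1{b-a}\int_a^b f(y)g(y)\,dy\le f\big(\tfrac{a+b}2\big)g\big(\tfrac{a+b}2\big)$. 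The super-multiplicativity and similar-ordering hypotheses control the product $fg$ pointwise but do not, on their own, compare its average with its midpoint value; so making this step rigorous --- identifying the precise midpoint/concavity property of $fg$ under which it is valid --- is where the real work lies, everything before it being routine once the coefficients have been simplified.
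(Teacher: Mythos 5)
There is a genuine gap, and it sits exactly where you predicted. The step you flag as ``the real work'' --- replacing $\frac{1}{b-a}\int_a^b f(y)g(y)\,dy$ by $f\left(\frac{a+b}{2}\right)g\left(\frac{a+b}{2}\right)$ --- is not merely unproved under the stated hypotheses; for convex-type functions it points the wrong way. The Hermite--Hadamard inequality gives the midpoint value as a \emph{lower} bound for the average of a convex function, so for $fg$ inheriting any convexity-like behaviour one expects $f\left(\frac{a+b}{2}\right)g\left(\frac{a+b}{2}\right)\leq\frac{1}{b-a}\int_a^b f(y)g(y)\,dy$, the reverse of what your plan needs. No amount of super-multiplicativity or similar ordering will rescue it, since those hypotheses say nothing comparing an integral mean with a point value. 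So the strategy ``copy Theorem \ref{t4} verbatim and patch the second term at the end'' cannot be completed.

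The paper takes a different route: it introduces the midpoint at the level of the convexity inequality itself, not at the level of the integrated average. That is, it applies $(h-s)_2$-convexity with the second node frozen at $\frac{a+b}{2}$, writing $f\left(tx+(1-t)\frac{a+b}{2}\right)\leq h^{s}(t)f(x)+h^{s}(1-t)f\left(\frac{a+b}{2}\right)$ and likewise for $g$, then multiplies, uses super-multiplicativity to get the coefficients $h^{s}(t^{2})$, $h^{s}((1-t)^{2})$, $h^{s}(t-t^{2})$, and uses similar ordering to absorb the cross term $f\left(\frac{a+b}{2}\right)g(x)+f(x)g\left(\frac{a+b}{2}\right)$ into $f(x)g(x)+f\left(\frac{a+b}{2}\right)g\left(\frac{a+b}{2}\right)$. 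The factor $f\left(\frac{a+b}{2}\right)g\left(\frac{a+b}{2}\right)$ then appears as a constant and survives the integration untouched; no comparison between an average and a midpoint value is ever needed. (One should add that the paper's own write-up is not clean either: after the substitution $y=\frac{a+b}{2}$ the left-hand side it integrates no longer matches the general $f(tx+(1-t)y)g(tx+(1-t)y)$ appearing in the statement, so the displayed theorem and the displayed proof do not quite fit together. But the essential mechanism --- specialize $y$ to the midpoint \emph{before} multiplying and integrating --- is the idea your proposal is missing.)
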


\begin{proof}
Since $f,g$ are $\left( h-s\right) _{2}-$convex functions on $I$, we have%
\begin{eqnarray*}
f\left( tx+\left( 1-t\right) \frac{a+b}{2}\right) &\leq
&h^{s}(t)f(x)+h^{s}\left( 1-t\right) f\left( \frac{a+b}{2}\right) \\
g\left( tx+\left( 1-t\right) \frac{a+b}{2}\right) &\leq
&h^{s}(t)g(x)+h^{s}\left( 1-t\right) g\left( \frac{a+b}{2}\right) .
\end{eqnarray*}%
By multiplying the above inequalities and since $h$ is
super-multiplicative
function, we get%
\begin{eqnarray*}
&&f\left( tx+\left( 1-t\right) \frac{a+b}{2}\right) g\left( tx+\left(
1-t\right) \frac{a+b}{2}\right) \\
&\leq &h^{s}(t^{2})f(x)g(x)+h^{s}\left( \left( 1-t\right) ^{2}\right)
f\left( \frac{a+b}{2}\right) g\left( \frac{a+b}{2}\right) \\
&&+h^{s}(t-t^{2})\left[ f\left( \frac{a+b}{2}\right) g(x)+f(x)g\left( \frac{%
a+b}{2}\right) \right] .
\end{eqnarray*}%
Since $f$ and $g$ are similarly ordered functions, we get%
\begin{eqnarray*}
&&f\left( tx+\left( 1-t\right) y\right) g(tx+\left( 1-t\right) y) \\
&\leq &\left( h^{s}(t^{2})+h^{s}(t-t^{2})\right) f(x)g(x)+\left( h^{s}\left(
\left( 1-t\right) ^{2}\right) +h^{s}(t-t^{2})\right) f\left( \frac{a+b}{2}%
\right) g\left( \frac{a+b}{2}\right) .
\end{eqnarray*}%
By integrating the resulting inequality with respect to $t$ over $\left[ 0,1%
\right] $ and with respect to $x,y$ over $\left[ a,b\right] \times \left[ a,b%
\right] ,$ we have%
\begin{eqnarray*}
&&\frac{1}{b-a}\dint\limits_{a}^{b}\dint\limits_{a}^{b}\dint\limits_{0}^{1}f%
\left( tx+\left( 1-t\right) y\right) g(tx+\left( 1-t\right) y)dtdxdy \\
&\leq &\left( \dint\limits_{a}^{b}f(x)g(x)dx\right) \left(
\dint\limits_{0}^{1}\left( h^{s}(t^{2})+h^{s}(t-t^{2})\right) dt\right) \\
&&+\left( b-a\right) f\left( \frac{a+b}{2}\right) g\left( \frac{a+b}{2}%
\right) \left( \dint\limits_{0}^{1}\left( h^{s}\left( \left( 1-t\right)
^{2}\right) +h^{s}(t-t^{2})\right) dt\right) .
\end{eqnarray*}%
By dividing $\frac{1}{b-a},$ the proof is completed.
\end{proof}

\begin{theorem}
\label{t6}Let $f,g\in SX(\left( h-s\right) _{2},I)$, $h$ is
super-multiplicative and $f,g$ be similarly ordered functions on $I$. If $%
f,g,fg\in L_{1}\left[ a,b\right] $ and $h\in L_{1}\left[ 0,1\right] ,$ then
we have the following inequalities;%
\begin{eqnarray}
&&f\left( \frac{a+b}{2}\right) g\left( \frac{a+b}{2}\right)  \label{2.6a} \\
&\leq &2h^{s}\left( \frac{1}{4}\right) M\left( a,b\right)
\dint\limits_{0}^{1}\left( h^{s}\left( t\right) +h^{s}\left( 1-t\right)
\right) ^{2}dt.  \notag
\end{eqnarray}%
and%
\begin{eqnarray}
&&\frac{1}{2h^{2s}\left( \frac{1}{2}\right) }f\left( \frac{a+b}{2}\right)
g\left( \frac{a+b}{2}\right)  \label{2.6b} \\
&\leq &\frac{1}{b-a}\dint\limits_{a}^{b}f\left( x\right) g\left( x\right) dx+%
\frac{M\left( a,b\right) }{2}\left( \dint\limits_{0}^{1}\left( h^{s}\left(
t\right) +h^{s}\left( 1-t\right) \right) ^{2}dt\right)  \notag
\end{eqnarray}%
for all $s\in \left( 0,1\right] .$
\end{theorem}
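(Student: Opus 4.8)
The plan is to exploit the midpoint representation $\frac{a+b}{2} = \frac{1}{2}\bigl(ta+(1-t)b\bigr) + \frac{1}{2}\bigl(tb+(1-t)a\bigr)$, valid for every $t\in[0,1]$, and to apply $(h-s)_{2}$-convexity with the weight $\tfrac12$. Writing $u=ta+(1-t)b$ and $v=tb+(1-t)a$, this gives
\[
f\left(\frac{a+b}{2}\right) \leq h^{s}\left(\tfrac12\right)\bigl[f(u)+f(v)\bigr], \qquad g\left(\frac{a+b}{2}\right) \leq h^{s}\left(\tfrac12\right)\bigl[g(u)+g(v)\bigr].
\]
Multiplying these two nonnegative inequalities yields the master estimate
\[
f\left(\frac{a+b}{2}\right)g\left(\frac{a+b}{2}\right) \leq h^{2s}\left(\tfrac12\right)\bigl[f(u)+f(v)\bigr]\bigl[g(u)+g(v)\bigr],
\]
which holds for every $t$; since the left-hand side is independent of $t$, it may be integrated over $[0,1]$ at the end. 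Both inequalities of the theorem come from this master estimate by two different ways of bounding its right-hand side.

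For \eqref{2.6a} I would first use super-multiplicativity in the form $h^{2s}(\tfrac12)\leq h^{s}(\tfrac14)$, which is legitimate because $h(\tfrac14)=h(\tfrac12\cdot\tfrac12)\geq h(\tfrac12)^{2}$ together with $h\geq0$ and $s>0$. Then $(h-s)_{2}$-convexity applied to each summand separately gives $f(u)+f(v)\leq \bigl(h^{s}(t)+h^{s}(1-t)\bigr)\bigl(f(a)+f(b)\bigr)$, and likewise for $g$, so the bracketed product is at most $\bigl(h^{s}(t)+h^{s}(1-t)\bigr)^{2}\bigl(f(a)+f(b)\bigr)\bigl(g(a)+g(b)\bigr)$. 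Expanding the endpoint product as $M(a,b)+N(a,b)$ and invoking similar ordering at the endpoints, namely $(f(a)-f(b))(g(a)-g(b))\geq0$ which forces $N(a,b)\leq M(a,b)$, bounds this by $2M(a,b)\bigl(h^{s}(t)+h^{s}(1-t)\bigr)^{2}$. Integrating over $t\in[0,1]$ produces \eqref{2.6a}.

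For \eqref{2.6b} I would instead divide the master estimate by $2h^{2s}(\tfrac12)$ and expand the product as $f(u)g(u)+f(v)g(v)+f(u)g(v)+f(v)g(u)$. Integrating over $t$, the two diagonal terms each contribute $\frac{1}{b-a}\int_{a}^{b}f(x)g(x)\,dx$ after the change of variable $x=u(t)$ (resp.\ $x=v(t)$), which carries $[0,1]$ onto $[a,b]$; together they give $\frac{2}{b-a}\int_{a}^{b}fg$. For the two cross terms I would bound $f(u),g(v),f(v),g(u)$ individually by their $(h-s)_{2}$-convexity estimates, multiply out, and collect coefficients: the diagonal coefficients yield $2h^{s}(t)h^{s}(1-t)M(a,b)$ and the off-diagonal ones $\bigl(h^{2s}(t)+h^{2s}(1-t)\bigr)N(a,b)$, and a second application of $N(a,b)\leq M(a,b)$ collapses the sum to $M(a,b)\bigl(h^{s}(t)+h^{s}(1-t)\bigr)^{2}$. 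Integrating and combining with the diagonal contribution gives \eqref{2.6b}.

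The change-of-variable computation and the algebraic expansions are routine and I would not carry them out in full. The step needing the most care is keeping the direction of every inequality consistent: super-multiplicativity must be used as $h^{2s}(\tfrac12)\leq h^{s}(\tfrac14)$ so that it still furnishes an upper bound, and the similar-ordering hypothesis must be invoked at the endpoints $a,b$ so as to replace the unwanted $N(a,b)$ by $M(a,b)$ and not conversely. Once these two directions are pinned down, both inequalities follow by integration over $t$.
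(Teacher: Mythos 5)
Your proof is correct and follows essentially the same route as the paper's: the midpoint decomposition $\frac{a+b}{2}=\frac{1}{2}u+\frac{1}{2}v$ with $u=ta+(1-t)b$, $v=tb+(1-t)a$, the super-multiplicativity bound $h^{2s}\left(\frac{1}{2}\right)\le h^{s}\left(\frac{1}{4}\right)$, the reduction $N(a,b)\le M(a,b)$ from similar ordering, and the change of variables on the diagonal terms $f(u)g(u)+f(v)g(v)$ for the second inequality. If anything, your derivation of \eqref{2.6a} is slightly cleaner than the paper's, which takes an unnecessary detour by replacing $\left(h^{s}(t)+h^{s}(1-t)\right)^{2}$ with $h^{s}(t^{2})+2h^{s}(t-t^{2})+h^{s}\left((1-t)^{2}\right)$ (a step that goes in the wrong direction for an upper bound and is then silently reversed), whereas you pass directly from $\left(h^{s}(t)+h^{s}(1-t)\right)^{2}\left(M(a,b)+N(a,b)\right)$ to $2M(a,b)\left(h^{s}(t)+h^{s}(1-t)\right)^{2}$.
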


\begin{proof}
Since $f$ and $g$ are $\left( h-s\right) _{2}-$convex on $[a,b]$, then for $%
t\in \lbrack a,b],$ we observe that%
\begin{eqnarray}
&&f\left( \frac{a+b}{2}\right) g\left( \frac{a+b}{2}\right)  \label{k} \\
&=&f\left( \frac{ta+\left( 1-t\right) b}{2}+\frac{tb+\left( 1-t\right) a}{2}%
\right) g\left( \frac{ta+\left( 1-t\right) b}{2}+\frac{tb+\left( 1-t\right) a%
}{2}\right)  \notag \\
&\leq &h^{2s}\left( \frac{1}{2}\right) \left[ \left( f\left( ta+\left(
1-t\right) b\right) +f\left( tb+\left( 1-t\right) a\right) \right) \right]
\notag \\
&&\times \left[ \left( g\left( ta+\left( 1-t\right) b\right) +g\left(
tb+\left( 1-t\right) a\right) \right) \right] .  \notag
\end{eqnarray}%
By using $\left( h-s\right) _{2}-$convexity of $f$ and $g,$ we get%
\begin{eqnarray*}
&&f\left( \frac{a+b}{2}\right) g\left( \frac{a+b}{2}\right) \\
&\leq &h^{2s}\left( \frac{1}{2}\right) \left[ \left( h^{s}\left( t\right)
f\left( a\right) +h^{s}\left( 1-t\right) f\left( b\right) +h^{s}\left(
t\right) f\left( b\right) +h^{s}\left( 1-t\right) f\left( a\right) \right) %
\right] \\
&&\times \left[ \left( h^{s}\left( t\right) g\left( a\right) +h^{s}\left(
1-t\right) g\left( b\right) +h^{s}\left( t\right) g\left( b\right)
+h^{s}\left( 1-t\right) g\left( a\right) \right) \right] .
\end{eqnarray*}%
Since $h$ is super-multiplicative, we have%
\begin{eqnarray*}
&&f\left( \frac{a+b}{2}\right) g\left( \frac{a+b}{2}\right) \\
&\leq &h^{s}\left( \frac{1}{4}\right) \left( h^{s}\left( t^{2}\right)
+2h^{s}\left( t-t^{2}\right) +h^{s}\left( \left( 1-t\right) ^{2}\right)
\right) \\
&&\times \left[ f\left( a\right) g\left( a\right) +f\left( b\right) g\left(
b\right) +f\left( a\right) g\left( b\right) +f\left( b\right) g\left(
a\right) \right] .
\end{eqnarray*}%
By using the similarly ordered property of $f$ and $g,$ we obtain
\begin{eqnarray*}
&&f\left( \frac{a+b}{2}\right) g\left( \frac{a+b}{2}\right) \\
&\leq &2h^{s}\left( \frac{1}{4}\right) \left( h^{s}\left( t\right)
+h^{s}\left( 1-t\right) \right) ^{2}\left[ f\left( a\right) g\left( a\right)
+f\left( b\right) g\left( b\right) \right] .
\end{eqnarray*}%
By integrating both sides respect to $t$ over the interval $\left[ 0,1\right]
,$ we get%
\begin{eqnarray*}
&&f\left( \frac{a+b}{2}\right) g\left( \frac{a+b}{2}\right) \\
&\leq &2h^{s}\left( \frac{1}{4}\right) \left[ f\left( a\right) g\left(
a\right) +f\left( b\right) g\left( b\right) \right] \dint\limits_{0}^{1}%
\left( h^{s}\left( t\right) +h^{s}\left( 1-t\right) \right) ^{2}dt.
\end{eqnarray*}%
Which completes the proof of the first inequality. Therefore, from the
inequality (\ref{k}), we can write%
\begin{eqnarray*}
&&f\left( \frac{a+b}{2}\right) g\left( \frac{a+b}{2}\right) \\
&\leq &h^{2s}\left( \frac{1}{2}\right) \left[ \left( f\left( ta+\left(
1-t\right) b\right) +f\left( tb+\left( 1-t\right) a\right) \right) \right] \\
&&\times \left[ \left( g\left( ta+\left( 1-t\right) b\right) +g\left(
tb+\left( 1-t\right) a\right) \right) \right] \\
&\leq &h^{2s}\left( \frac{1}{2}\right) \left[ f\left( ta+\left( 1-t\right)
b\right) g\left( ta+\left( 1-t\right) b\right) \right. \\
&&\left. +f\left( tb+\left( 1-t\right) a\right) g\left( tb+\left( 1-t\right)
a\right) \right] \\
&&+h^{2s}\left( \frac{1}{2}\right) \left[ \left[ h^{s}\left( t\right)
f\left( a\right) +h^{s}\left( 1-t\right) f\left( b\right) \right] \left[
h^{s}\left( t\right) g\left( b\right) +h^{s}\left( 1-t\right) g\left(
a\right) \right] \right. \\
&&\left. +\left[ h^{s}\left( t\right) f\left( b\right) +h^{s}\left(
1-t\right) f\left( a\right) \right] \left[ h^{s}\left( t\right) g\left(
a\right) +h^{s}\left( 1-t\right) g\left( b\right) \right] \right] .
\end{eqnarray*}%
Now by using the similar ordered property of $f$ and $g,$ by integrating the
resulting inequality, we have%
\begin{eqnarray*}
&&f\left( \frac{a+b}{2}\right) g\left( \frac{a+b}{2}\right) \\
&\leq &h^{2s}\left( \frac{1}{2}\right) \dint\limits_{0}^{1}\left[ f\left(
ta+\left( 1-t\right) b\right) g\left( ta+\left( 1-t\right) b\right) \right.
\\
&&\left. +f\left( tb+\left( 1-t\right) a\right) g\left( tb+\left( 1-t\right)
a\right) \right] dt \\
&&+h^{2s}\left( \frac{1}{2}\right) \left[ f\left( a\right) g\left( a\right)
+f\left( b\right) g\left( b\right) \right] \left( \dint\limits_{0}^{1}\left(
h^{s}\left( t\right) +h^{s}\left( 1-t\right) \right) ^{2}dt\right) .
\end{eqnarray*}%
Changing of the variable, we obtain%
\begin{eqnarray*}
&&f\left( \frac{a+b}{2}\right) g\left( \frac{a+b}{2}\right) \\
&\leq &\frac{2h^{2s}\left( \frac{1}{2}\right) }{b-a}\dint\limits_{a}^{b}f%
\left( x\right) g\left( x\right) dx \\
&&+h^{2s}\left( \frac{1}{2}\right) \left[ f\left( a\right) g\left( a\right)
+f\left( b\right) g\left( b\right) \right] \left( \dint\limits_{0}^{1}\left(
h^{s}\left( t\right) +h^{s}\left( 1-t\right) \right) ^{2}dt\right) .
\end{eqnarray*}%
Dividing both sides of the resulting inequality by $2h^{2s}\left( \frac{1}{2}%
\right) ,$ we get the second inequality.
\end{proof}

\begin{remark}
\bigskip In Theorem \ref{t6}, if we take $h\left( t\right) =t,$ then we get%
\begin{eqnarray*}
&&f\left( \frac{a+b}{2}\right) g\left( \frac{a+b}{2}\right) \\
&\leq &2^{1-2s}M\left( a,b\right) \left( \frac{1}{1+2s}+\frac{\sqrt{\pi }}{%
2^{2s}}\frac{\Gamma \left( 1+s\right) }{\Gamma \left( \frac{3}{2}+s\right) }+%
\frac{2^{2s}\sqrt{2}}{2^{2s-1/2}\left( 2+4s\right) }\right) \\
&=&M\left( a,b\right) \left( \frac{4}{2^{2s}\left( 1+2s\right) }+\frac{2%
\sqrt{\pi }}{2^{4s}}\frac{\Gamma \left( 1+s\right) }{\Gamma \left( \frac{3}{2%
}+s\right) }\right)
\end{eqnarray*}%
and%
\begin{eqnarray*}
&&2^{2s-1}f\left( \frac{a+b}{2}\right) g\left( \frac{a+b}{2}\right) \\
&\leq &\frac{1}{b-a}\dint\limits_{a}^{b}f\left( x\right) g\left( x\right) dx+%
\frac{M\left( a,b\right) }{2}\dint\limits_{0}^{1}\left( t^{s}+\left(
1-t\right) ^{s}\right) ^{2}dt \\
&=&\frac{1}{b-a}\dint\limits_{a}^{b}f\left( x\right) g\left( x\right) dx+%
\frac{M\left( a,b\right) }{2}\left( \frac{2}{1+2s}+\frac{\sqrt{\pi }}{2^{2s}}%
\frac{\Gamma \left( 1+s\right) }{\Gamma \left( \frac{3}{2}+s\right) }\right)
\end{eqnarray*}%
for all $s\in \left( 0,1\right] .$
\end{remark}

\begin{theorem}
\label{t7}Let $f,g\in SX(\left( h-s\right) _{2},I)$, $h$ is
super-multiplicative and $f,g$ be similarly ordered functions on $I$. If $%
f,g,fg\in L_{1}\left[ a,b\right] $ and $h\in L_{1}\left[ 0,1\right] ,$ then
we have the following inequalities;%
\begin{eqnarray*}
&&\frac{g\left( b\right) }{b-a}\int\limits_{a}^{b}h^{s}\left( \frac{x-a}{b-a}%
\right) f\left( x\right) dx+\frac{g\left( a\right) }{b-a}\int%
\limits_{a}^{b}h^{s}(\frac{b-x}{b-a})f\left( x\right) dx \\
&&+\frac{f\left( b\right) }{b-a}\int\limits_{a}^{b}h^{s}(\frac{x-a}{b-a}%
)g\left( x\right) dx+\frac{f\left( a\right) }{b-a}\int\limits_{a}^{b}h^{s}(%
\frac{b-x}{b-a})g\left( x\right) dx \\
&\leq &\frac{1}{b-a}\int\limits_{a}^{b}f\left( x\right) g\left( x\right)
dx+f\left( b\right) g\left( b\right) \int\limits_{0}^{1}\left[ h^{s}\left(
t-t^{2}\right) +h^{s}\left( t^{2}\right) \right] dt \\
&&+f\left( a\right) g\left( a\right) \int\limits_{0}^{1}\left[ h^{s}\left(
\left( 1-t\right) ^{2}\right) +h^{s}\left( t-t^{2}\right) \right] dt.
\end{eqnarray*}%
for all $s\in \left( 0,1\right] .$
\end{theorem}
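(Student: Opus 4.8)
The plan is to exploit the $(h-s)_{2}$-convexity of $f$ and $g$ evaluated at the endpoints $a$ and $b$, and then to couple the two resulting bounds through a product of non-positive quantities. Putting $t=\frac{x-a}{b-a}$, so that $1-t=\frac{b-x}{b-a}$, inequality (\ref{22}) applied to the points $b$ and $a$ reads
\begin{equation*}
f(x)\leq h^{s}\!\left(\frac{x-a}{b-a}\right)f(b)+h^{s}\!\left(\frac{b-x}{b-a}\right)f(a),\qquad g(x)\leq h^{s}\!\left(\frac{x-a}{b-a}\right)g(b)+h^{s}\!\left(\frac{b-x}{b-a}\right)g(a).
\end{equation*}
Abbreviating $p=h^{s}(\frac{x-a}{b-a})$ and $q=h^{s}(\frac{b-x}{b-a})$, both $f(x)-pf(b)-qf(a)$ and $g(x)-pg(b)-qg(a)$ are non-positive, so their product is non-negative. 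This sign observation is the heart of the argument.

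Expanding that product and rearranging, I obtain the pointwise estimate
\begin{equation*}
f(x)\bigl[pg(b)+qg(a)\bigr]+g(x)\bigl[pf(b)+qf(a)\bigr]\leq f(x)g(x)+\bigl[pf(b)+qf(a)\bigr]\bigl[pg(b)+qg(a)\bigr].
\end{equation*}
Multiplying out the left-hand side produces precisely the four mixed terms $pg(b)f(x)$, $qg(a)f(x)$, $pf(b)g(x)$, $qf(a)g(x)$ that occur on the left of the claimed inequality, so it remains to control the quadratic form $[pf(b)+qf(a)][pg(b)+qg(a)]=p^{2}f(b)g(b)+q^{2}f(a)g(a)+pq\,[f(b)g(a)+f(a)g(b)]$.

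Here I invoke super-multiplicativity in the form $h(t)h(t)\leq h(t^{2})$, $h(1-t)h(1-t)\leq h((1-t)^{2})$ and $h(t)h(1-t)\leq h(t-t^{2})$; raising to the $s$th power gives $p^{2}\leq h^{s}(t^{2})$, $q^{2}\leq h^{s}((1-t)^{2})$ and $pq\leq h^{s}(t-t^{2})$. Since $f,g\geq 0$ and $h^{s}(t-t^{2})\geq 0$, the similarly ordered hypothesis, in the expanded form $f(b)g(a)+f(a)g(b)\leq f(a)g(a)+f(b)g(b)$, controls the cross term, and I arrive at
\begin{equation*}
\bigl[pf(b)+qf(a)\bigr]\bigl[pg(b)+qg(a)\bigr]\leq f(b)g(b)\bigl[h^{s}(t^{2})+h^{s}(t-t^{2})\bigr]+f(a)g(a)\bigl[h^{s}((1-t)^{2})+h^{s}(t-t^{2})\bigr].
\end{equation*}

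Finally I integrate the pointwise inequality in $x$ over $[a,b]$ and divide by $b-a$. On the left, the four mixed terms become exactly the four weighted integrals in the statement. On the right, $f(x)g(x)$ integrates to $\frac{1}{b-a}\int_{a}^{b}fg$, while the constants $f(b)g(b)$ and $f(a)g(a)$ factor out and the substitution $t=\frac{x-a}{b-a}$, $dx=(b-a)\,dt$ turns the surviving $h^{s}$ factors into $\int_{0}^{1}[h^{s}(t^{2})+h^{s}(t-t^{2})]\,dt$ and $\int_{0}^{1}[h^{s}((1-t)^{2})+h^{s}(t-t^{2})]\,dt$, which yields the asserted inequality. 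The only genuinely delicate point is the sign observation in the first paragraph---recognizing that the product of the two non-positive convexity deficits is non-negative, so that the quadratic form may be sent to the upper-bound side; after that, everything is routine bookkeeping with super-multiplicativity, the ordering hypothesis, and the change of variables.
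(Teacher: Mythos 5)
Your proof is correct and follows essentially the same route as the paper: your observation that the product of the two non-positive convexity deficits is non-negative is exactly the paper's "elementary inequality" ($e\leq f$, $p\leq r$ $\Rightarrow$ $er+fp\leq ep+fr$), and the subsequent use of super-multiplicativity, the similarly ordered hypothesis on the cross term, and the change of variables all match the paper's argument step for step.
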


\begin{proof}
Since $f$ and $g$ are $\left( h-s\right) _{2}-$convex functions, we can write%
\begin{equation*}
f\left( tb+\left( 1-t\right) a\right) \leq h^{s}\left( t\right) f\left(
b\right) +h^{s}\left( 1-t\right) f\left( a\right)
\end{equation*}%
and%
\begin{equation*}
g\left( tb+\left( 1-t\right) a\right) \leq h^{s}\left( t\right) g\left(
b\right) +h^{s}\left( 1-t\right) g\left( a\right)
\end{equation*}%
By using the elementary inequality, $e\leq f$ and $p\leq r$, then $er+fp\leq
ep+fr$ for $e,f,p,r\in
\mathbb{R}
,$ then we get%
\begin{eqnarray*}
&&f\left( tb+\left( 1-t\right) a\right) \left[ h^{s}\left( t\right) g\left(
b\right) +h^{s}\left( 1-t\right) g\left( a\right) \right] \\
&&+g\left( tb+\left( 1-t\right) a\right) \left[ h^{s}\left( t\right) f\left(
b\right) +h^{s}\left( 1-t\right) f\left( a\right) \right] \\
&\leq &f\left( tb+\left( 1-t\right) a\right) g\left( tb+\left( 1-t\right)
a\right) \\
&&+\left[ h^{s}\left( t\right) f\left( b\right) +h^{s}\left( 1-t\right)
f\left( a\right) \right] \left[ h^{s}\left( t\right) g\left( b\right)
+h^{s}\left( 1-t\right) g\left( a\right) \right] .
\end{eqnarray*}%
So, we obtain%
\begin{eqnarray*}
&&h^{s}\left( t\right) f\left( tb+\left( 1-t\right) a\right) g\left(
b\right) +h^{s}\left( 1-t\right) f\left( tb+\left( 1-t\right) a\right)
g\left( a\right) \\
&&+h^{s}\left( t\right) f\left( b\right) g\left( tb+\left( 1-t\right)
a\right) +h^{s}\left( 1-t\right) f\left( a\right) g\left( tb+\left(
1-t\right) a\right) \\
&\leq &f\left( tb+\left( 1-t\right) a\right) g\left( tb+\left( 1-t\right)
a\right) \\
&&+\left[ h^{2s}\left( t\right) f\left( b\right) g(b)+h^{s}\left( t\right)
h^{s}\left( 1-t\right) f(b)g(a)+h^{s}\left( t\right) h^{s}\left( 1-t\right)
f\left( a\right) g(b)+h^{2s}\left( 1-t\right) f(a)g(a)\right] .
\end{eqnarray*}%
By using similarly ordered property of $f$ and $g,$ we have%
\begin{eqnarray*}
&&h^{s}\left( t\right) f\left( tb+\left( 1-t\right) a\right) g\left(
b\right) +h^{s}\left( 1-t\right) f\left( tb+\left( 1-t\right) a\right)
g\left( a\right) \\
&&+h^{s}\left( t\right) f\left( b\right) g\left( tb+\left( 1-t\right)
a\right) +h^{s}\left( 1-t\right) f\left( a\right) g\left( tb+\left(
1-t\right) a\right) \\
&\leq &f\left( tb+\left( 1-t\right) a\right) g\left( tb+\left( 1-t\right)
a\right) \\
&&+\left[ h^{2s}\left( 1-t\right) +h^{s}\left( t\right) h^{s}\left(
1-t\right) \right] f(a)g(a)+\left[ h^{s}\left( t\right) h^{s}\left(
1-t\right) +h^{2s}\left( t\right) \right] f\left( b\right) g(b).
\end{eqnarray*}%
Since $h$ is super-multiplicative, we can write%
\begin{eqnarray*}
&&h^{s}\left( t\right) f\left( tb+\left( 1-t\right) a\right) g\left(
b\right) +h^{s}\left( 1-t\right) f\left( tb+\left( 1-t\right) a\right)
g\left( a\right) \\
&&+h^{s}\left( t\right) f\left( b\right) g\left( tb+\left( 1-t\right)
a\right) +h^{s}\left( 1-t\right) f\left( a\right) g\left( tb+\left(
1-t\right) a\right) \\
&\leq &f\left( tb+\left( 1-t\right) a\right) g\left( tb+\left( 1-t\right)
a\right) \\
&&+\left[ h^{s}\left( \left( 1-t\right) ^{2}\right) +h^{s}\left(
t-t^{2}\right) \right] f(a)g(a)+\left[ h^{s}\left( t-t^{2}\right)
+h^{s}\left( t^{2}\right) \right] f\left( b\right) g(b).
\end{eqnarray*}%
By integrating this inequality with respect to $t$ over $\left[ 0,1\right] $
and by using the change of the variable $tb+\left( 1-t\right) a=x,$ $%
(b-a)dt=dx,$ the proof is completed.
\end{proof}

\bigskip

\section{\protect\bigskip Applications to Some Special Means}

\bigskip We shall consider the means as arbitrary positive real numbers $%
a,b, $ $a\neq b.$

The geometric mean:
\begin{equation*}
G=G\left( a,b\right) :=\sqrt{ab},\text{ \ }a,b\geq 0,
\end{equation*}

The Identric mean.

\begin{equation*}
I=I\left( a,b\right) :=\left\{
\begin{array}{l}
a\text{ \ \ \ \ \ \ \ \ \ \ \ \ \ \ \ \ \ if \ \ }a=b \\
\frac{1}{e}\left( \frac{b^{b}}{a^{a}}\right) ^{\frac{1}{b-a}}\text{ \ \ \ \
\ if \ \ }a\neq b%
\end{array}%
\right. ,\text{ \ }a,b\geq 0,
\end{equation*}

Now, we present some applications of the result in section 2 to the special
means of real numbers. The following propositions hold:

\begin{proposition}
\label{p1}\bigskip \textit{Let }$2<a<b<\infty .$\textit{\ Then for all }$%
s\in (0,1],$ \textit{we have}%
\begin{equation*}
I(a,b)\leq G^{\frac{2}{s+1}}(a,b).
\end{equation*}
\end{proposition}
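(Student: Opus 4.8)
The plan is to recognise the Identric mean as an integral of $\ln$ and then exploit the $(h-s)_{2}$-convexity of the logarithm. By the Example, the function $f(x)=\ln x$ belongs to $SX((h-s)_{2},[a,b])$ with the weight $h(t)=t$; the hypothesis $2<a$ is exactly what keeps us in the range where $\ln x>0$ and this membership is available. Writing the defining inequality (\ref{22}) for $f=\ln$ with $u=a$, $v=b$, $h(t)=t$ gives, for every $t\in[0,1]$,
\[
\ln\bigl(ta+(1-t)b\bigr)\leq t^{s}\ln a+(1-t)^{s}\ln b.
\]

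Next I would integrate this pointwise estimate over $t\in[0,1]$. On the right the integrals are immediate, $\int_{0}^{1}t^{s}\,dt=\int_{0}^{1}(1-t)^{s}\,dt=\frac{1}{s+1}$, so that side becomes $\frac{\ln a+\ln b}{s+1}=\frac{1}{s+1}\ln(ab)$. On the left I would use the substitution $x=ta+(1-t)b$, $dx=(a-b)\,dt$, which identifies $\int_{0}^{1}\ln\bigl(ta+(1-t)b\bigr)\,dt$ with $\frac{1}{b-a}\int_{a}^{b}\ln x\,dx$.

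The one computational point is then to read off the two sides as logarithms of the stated means. Since $\int_{a}^{b}\ln x\,dx=b\ln b-a\ln a-(b-a)$, we get $\frac{1}{b-a}\int_{a}^{b}\ln x\,dx=\frac{b\ln b-a\ln a}{b-a}-1=\ln\!\left[\frac{1}{e}\Bigl(\frac{b^{b}}{a^{a}}\Bigr)^{1/(b-a)}\right]=\ln I(a,b)$ straight from the definition of the Identric mean, while $\frac{1}{s+1}\ln(ab)=\frac{2}{s+1}\ln\sqrt{ab}=\ln G^{2/(s+1)}(a,b)$. Hence $\ln I(a,b)\leq\ln G^{2/(s+1)}(a,b)$, and applying the (increasing) exponential function yields $I(a,b)\leq G^{2/(s+1)}(a,b)$. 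The genuinely substantive ingredient is the $(h-s)_{2}$-convexity of $\ln$ supplied by the Example; once that is granted, the remaining steps are a single elementary integration together with the closed forms of $G$ and $I$, so I anticipate no further obstacle beyond correctly invoking the Example to place $\ln x$ in the class $SX((h-s)_{2},[a,b])$ for $a>2$.
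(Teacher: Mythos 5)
Your proposal is correct to exactly the extent that the paper's own proof is, but it takes a genuinely more elementary route. The paper obtains the proposition as a corollary of Theorem \ref{t1}: it sets $f=g=\ln x$ and $h(t)=t$, notes that the left-hand side $\frac{1}{b-a}\int_a^b f^{\frac{x-a}{b-a}}(x)\,g^{\frac{b-x}{b-a}}(x)\,dx$ collapses to $\frac{1}{b-a}\int_a^b\ln x\,dx=\ln I(a,b)$ because $f=g$, and then evaluates the four weighted integrals on the right, which sum to $\frac{\ln a+\ln b}{s+2}+\frac{\ln a+\ln b}{(s+1)(s+2)}=\frac{\ln a+\ln b}{s+1}=\ln G^{\frac{2}{s+1}}(a,b)$. (Strictly speaking the integrands the paper writes, with factors $(b-x)^{s}$, come from $h^{s}(1-t)$ and hence from the $(h-s)_2$ form of Theorem \ref{t2}, not the $1-h^{s}(t)$ form of Theorem \ref{t1}; your use of the second-sense definition matches what is actually computed.) You bypass the theorem entirely and integrate the defining inequality $\ln(ta+(1-t)b)\leq t^{s}\ln a+(1-t)^{s}\ln b$ over $t\in[0,1]$, arriving at the same two quantities in one step and without the General Cauchy inequality. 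This is cleaner and makes transparent that the entire content of the proposition is one pointwise inequality together with $\int_0^1 t^{s}\,dt=\frac{1}{s+1}$.

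One caveat, which you inherit from the paper rather than introduce: both proofs rest entirely on the premise that $\ln x$ with $h(t)=t$ is $(h-s)_2$-convex on $[a,b]$. The Example asserts this only on $[2,4]$, not on all of $(2,\infty)$ as the proposition requires, and neither the paper nor your write-up verifies it for the given $s$. In fact at $s=1$ the required inequality reads $\ln(ta+(1-t)b)\leq t\ln a+(1-t)\ln b$, which is the reverse of what concavity of $\ln$ gives (take $t=\tfrac12$, $a=2$, $b=4$: it asserts $\ln 3\leq\ln\sqrt{8}$, which is false; correspondingly $I(2,4)=8/e>\sqrt{8}=G(2,4)$ contradicts the conclusion at $s=1$). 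So the weak link is the appeal to the Example, not your integration; since the paper's proof leans on exactly the same appeal, your proposal is a faithful, indeed tighter, rendering of the intended argument, and you should flag the membership claim as the step that actually needs justification.
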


\begin{proof}
The proof is immediate follows from Theorem \ref{t1} applied for $%
f(x)=g(x)=\ln x$; $x\in \lbrack 2;\infty )$ and $h(t)=t$. In other words,
the following is obtained.%
\begin{eqnarray*}
&&\frac{1}{b-a}\dint\limits_{a}^{b}lnxdx=\ln I(a,b) \\
&\leq &\frac{\ln b}{\left( b-a\right) ^{2+s}}\dint\limits_{a}^{b}\left(
x-a\right) \left( x-a\right) ^{s}dx+\frac{\ln a}{\left( b-a\right) ^{2+s}}%
\dint\limits_{a}^{b}\left( x-a\right) \left( b-x\right) ^{s}dx \\
&&+\frac{\ln b}{\left( b-a\right) ^{2+s}}\dint\limits_{a}^{b}\left(
b-x\right) \left( x-a\right) ^{s}dx+\frac{\ln a}{\left( b-a\right) ^{2+s}}%
\dint\limits_{a}^{b}\left( b-x\right) \left( b-x\right) ^{s}dx \\
&=&\frac{\ln a+\ln b}{s+2}+\frac{\ln a+\ln b}{\left( s+1\right) \left(
s+2\right) }=\frac{\ln a+\ln b}{s+1}=\ln G^{\frac{2}{s+1}}(a,b)
\end{eqnarray*}%
This completes the proof.
\end{proof}

\bigskip \bigskip

Is similar to the above features, may be different applications. We stay
away from getting into these details here. Interested readers can add new
ones to these applications. The theory of convex functions is expanding day
by day. The reason, researchers are opening a new chapter in the theory of
convex functions with each passing day. Convex function is obtained as a
result of studies of different classes often resemble each other in terms of
several properties. Covers all of the previous definitions of new classes
sometimes obtained, and sometimes carries a few features.

\bigskip

\end{document}